\documentclass[12pt]{amsart}

\usepackage{latexsym}
\usepackage{amsfonts}
\usepackage{amssymb}
\usepackage{fullpage}
\usepackage[OT4]{fontenc}
\usepackage{amscd}
\usepackage{textcomp}
\usepackage{color}

\newtheoremstyle{s2}{9pt}{9pt}{\rm}{\parindent}{\bf}{.}{0.5em}{}
\theoremstyle{s2} 

\newtheorem{definition}{Definition}

\newtheoremstyle{s1}{9pt}{9pt}{\it}{\parindent}{\bf}{.}{0.5em}{}
\theoremstyle{s1}
\newtheorem{lemma}[definition]{Lemma}
\newtheorem{theorem}[definition]{Theorem}
\newtheorem{corollary}[definition]{Corollary}
\newtheorem{proposition}[definition]{Proposition}
\newtheorem{remark}[definition]{Remark}
\newtheorem{observation}[definition]{Observation}

\newtheorem*{conjecture*}{Conjecture}

\def\P2{{\mathbb{P}^2}}
\def\P1{{\mathbb{P}^1}}
\def\Ce{\mathbb{C}}

\def\Zet{\mathbb{Z}}
\def\En{\mathbb{N}}

\def\Oo{\mathcal{O}}

\DeclareMathOperator{\mult}{mult}
\DeclareMathOperator{\lcm}{lcm}
\DeclareMathOperator{\Num}{Num}

\setcounter{section}{0}
\numberwithin{definition}{section}

\title[On $k$-jet ampleness of line bundles]
{On $k$-jet ampleness of line bundles on hyperelliptic surfaces} \makeatletter

\@addtoreset{equation}{section}
\makeatother
\author{{\L}ucja Farnik}
\address{{\L}ucja Farnik, Jagiellonian University, Faculty of Mathematics and Computer Science, {\L}ojasiewicza~6, 30-348 Krak\'{o}w, Poland} 
\email{lucja.farnik@uj.edu.pl}

\keywords{$k$-jet ampleness, generation of $k$-jets, hyperelliptic surfaces, vanishing theorems} 
\subjclass[2010]{14C20, 14F17, 14E25}

\date{\today}

\begin{document}
\bibliographystyle{alpha}

\begin{abstract}
We study $k$-jet ampleness of line bundles on hyperelliptic surfaces using vanishing theorems. Our main result states that on a hyperelliptic surface of an arbitrary type a line bundle of type $(m,m)$ with $m\geq k+2$ is $k$-jet ample.
\end{abstract}

\maketitle

\thispagestyle{empty}
\section{Introduction}

The concepts of higher order embeddings: $k$-spandness, $k$-very ampleness and $k$-jet ampleness were introduced and studied in a series of papers by M.C. Beltrametti, P. Francia and A.J. Sommese, see \cite{BeFS1989}, \cite{BeS1988}, \cite{BeS1993}. The last notion is of our main interest in the present work.

The problem of $k$-jet ampleness has been  studied on certain types of algebraic surfaces. In \cite{BaSz2-1997} Th. Bauer and T. Szemberg characterise $k$-jet ample line bundles on abelian surfaces with Picard number 1.
For an ample line bundle $L$ on a $K3$ surface Th.~Bauer, S.~Di Rocco and T. Szemberg in \cite{BaDRSz2000}, and S. Rams and T. Szemberg in \cite{RSz2004} explore for which $n$ the line bundle $nL$ is $k$-jet ample.

There are also several papers concerning $k$-jet ampleness in higher dimensions, e.g. \linebreak \cite{BaSz1997} study $k$-jet ampleness on abelian varieties, \cite{DR1999} on toric varieties, \cite{BeSz2000} on Calabi-Yau threefolds, \cite{ChI2014} on hyperelliptic varieties.

In the present paper we prove that on a hyperelliptic surface of an arbitrary type a line bundle of type $(m,m)$ with $m\geq k+2$ is $k$-jet ample. Note that a line bundle of type $(k+2,k+2)$ is numerically equivalent to $(k+2)L_1$ where $L_1=(1,1)$. By theory of hyperellipic surfaces we know that $L_1$ is ample,  so our result is consistent with results obtained on other algebraic surfaces with Kodaira dimension $0$. 
Our approach uses vanishing theorems  of the higher order cohomology groups --- Kawamata-Viehweg Theorem and Norimatsu Lemma. 

Proof of the fact that a line bundle of type $(k+2,k+2)$  is $k$-jet ample on any hyperelliptic surface~$S$ can be found also in \cite{ChI2014}.
 The authors use the fact that  $S$ is covered by an abelian surface divided by the group action, and the results of \cite{PP2004}. 
We provide a self-contained and more elementary proof of this fact.

\section{Preliminaries}
Let us set up the notation and basic definitions. We work over the field of complex numbers~$\Ce$. We consider only smooth reduced and irreducible projective varieties. By $D_1\equiv D_2$ we denote the numerical equivalence of divisors $D_1$ and $D_2$. By a curve we understand an irreducible subvariety of dimension 1. 
In the notation we follow \cite{PAG2004}.

\vskip 5pt
Let  $X$ be a smooth projective variety of dimension $n$. Let  $L$ be a line bundle on  $X$, and let $x\in X$.

\begin{definition}~

\begin{enumerate}
\item We say that  $L$ generates $k$-jets at $x$, if the restriction map $$H^0(X,L)\longrightarrow
 H^0(X,L\otimes\Oo_X/m_x^{k+1})$$  is surjective.
\item We say that  $L$ is $k$-jet ample, if for every points $x_1$, $\ldots$, $x_r$ the restriction map
$$H^0(X,L) \longrightarrow H^0\left(X,L\otimes  \Oo_{X}/(m_{x_1}^{k_1}\otimes\ldots\otimes m_{x_r}^{k_r})\right)$$
is surjective, where $\sum_{i=1}^r k_i=k+1$.
\end{enumerate}
\end{definition}
Note that $0$-jet ampleness is equivalent to being spanned by the global sections, and $1$-jet ampleness is equivalent to very ampleness.

The notion of $k$-jet ampleness  generalises the notion of very ampleness and $k$-very ampleness (see \cite{BeS1993}, Proposition 2.2). We recall the definition of $k$-very ampleness as we mention this notion it the proof of the main theorem:
\begin{definition}
We say that a line bundle $L$ is $k$-very ample if for every $0$-dimensional subscheme $Z\subset X$ of length $k+1$ the restriction map
 $$H^0(X,L)\longrightarrow H^0(X,L\otimes\Oo_Z)$$ is surjective.
\end{definition}

In the other words $k$-very ampleness means that the subschemes of length at most $k+1$ impose independent conditions on global sections of $L$.

We also recall the definition of the Seshadri constant:
\begin{definition} 
The Seshadri constant of $L$ at a given point $x\in X$ is the real number
$$\varepsilon(L,x)=\inf \left\{\frac{LC}{\mult_xC}: \  C\ni x\right\},$$
where the infimum is taken over all irreducible curves 
$C\subset X$ passing through $x$.
\end{definition}

If $\pi\colon\widetilde{X}\longrightarrow X$ is the blow-up of  $X$ at $x$, and $E$ is an exceptional divisor of the blow-up, then equivalently the Seshadri constant may be defined as (see e.g. \cite{PAG2004} vol. I, Proposition 5.1.5):
$$\varepsilon(L,x)=\sup \left\{\varepsilon: \ \pi^*L-\varepsilon E \text{ is nef} \right\}.$$

\vskip 5pt

We will use two vanishing theorems for the higher order cohomology groups --- Kawamata-Viehweg Vanishing Theorem and Norimatsu Lemma.
\begin{theorem}[Kawamata-Viehweg Vanishing Theorem; \cite{Laz1997}, Vanishing Theorem~5.2]
Let $D$ be an nef and big divisor on $X$. Then
$$H^i(X, K_X+D)=0 \quad \text{ for } i>0.$$
\end{theorem}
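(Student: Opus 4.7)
The plan is to reduce to the classical Kodaira Vanishing Theorem --- which gives $H^i(X, K_X + A) = 0$ for $A$ ample and $i>0$ --- by constructing an auxiliary finite cover on which the pulled-back divisor becomes ample. The conceptual obstacle is that a nef and big $\Zet$-divisor differs from an ample one by an effective piece whose fractional parts (after rescaling) must be absorbed through ramification; this is what prevents applying Kodaira directly to $D$.

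First I would invoke Kodaira's Lemma: since $D$ is big, there exist a positive integer $p$, an ample divisor $H$, and an effective divisor $N$ such that $pD \sim H + N$, presenting $D$ as an ample $\Qu$-divisor $\tfrac{1}{p}H$ plus a small effective fractional piece $\tfrac{1}{p}N$. To give this fractional piece a combinatorially tractable form, I would then take a log resolution $\mu : X' \longrightarrow X$ of the pair $(X,N)$ chosen so that $\mu^{*}N$ together with the exceptional locus of $\mu$ has simple normal crossings support.

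Next comes the cyclic (Kawamata-style) covering trick, which is the technical heart of the argument. After choosing an integer $m$ sufficiently divisible by the multiplicities of the components of $\mu^{*}N$, and introducing auxiliary very ample divisors in general position, one constructs a finite flat cover $\pi : Y \longrightarrow X'$ from a smooth variety $Y$, ramified along the appropriate components, so that the class $\pi^{*}\mu^{*}D$ is represented by an honestly ample $\Zet$-divisor on $Y$. Applying Kodaira Vanishing on $Y$ then yields $H^i(Y, K_Y + \pi^{*}\mu^{*}D) = 0$ for $i > 0$.

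The final step is to push this vanishing back down to $X$. The sheaf $\pi_{*}\Oo_Y$ splits, via the cyclic group action, as a direct sum of line bundles on $X'$ indexed by characters, and the projection formula combined with the Leray spectral sequence isolates the summand corresponding to $K_{X'} + \mu^{*}D$, giving $H^i(X', K_{X'} + \mu^{*}D) = 0$. A further Leray argument for $\mu$, using $R^j \mu_{*} \Oo_{X'}(K_{X'}) = 0$ for $j > 0$ (Grauert--Riemenschneider applied to the birational morphism), transfers the vanishing back to $H^i(X, K_X + D) = 0$. The main obstacle I anticipate is the third step: arranging the cover so that the ramification contributions, controlled by Riemann--Hurwitz, cancel the integer-rounding corrections exactly, producing a genuinely ample integral class on a \emph{smooth} $Y$ --- this is precisely why the preliminary log resolution and the auxiliary general divisors are needed.
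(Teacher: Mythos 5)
The paper does not prove this statement: it is quoted as a classical result with a citation to Lazarsfeld's Park City lectures, so there is no internal proof to compare your attempt against. Your outline reconstructs the standard argument from precisely that source --- Kodaira's lemma to split off an ample part from the big divisor, a log resolution to put the effective remainder in simple normal crossing position, the cyclic covering trick to absorb fractional coefficients into ramification, Kodaira vanishing upstairs, and descent via the character decomposition of $\pi_{*}\Oo_Y$ together with Grauert--Riemenschneider and the Leray spectral sequence for $\mu$. This is the right architecture and all the right ingredients.

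One step is misstated, however, and as written it would fail: the class $\pi^{*}\mu^{*}D$ can never be represented by an ample divisor on $Y$ when $\mu$ is a nontrivial birational morphism, because $\mu^{*}D$ has degree zero on every $\mu$-exceptional curve and $\pi$ is finite, so $\pi^{*}\mu^{*}D$ still has degree zero on the preimages of those curves; since ampleness is a numerical property, no representative of that class is ample. What the covering trick actually produces is an integral \emph{ample} divisor on $Y$ equal to the pullback of the ample $\Qu$-part $A' = \tfrac{1}{p}\bigl(\mu^{*}H - \sum_j \delta_j E_j\bigr)$ (the small exceptional correction $\sum_j \delta_j E_j$ is exactly what restores ampleness after the resolution), and Kodaira is applied to $K_Y$ plus that divisor. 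The vanishing for $K_{X'}+\mu^{*}D$ is then extracted from the eigensheaf decomposition because the round-up of $A' = \mu^{*}D - \Delta'$ equals $\mu^{*}D$, the fractional SNC part $\Delta'$ having coefficients in $[0,1)$. With that correction your sketch is the standard proof of the cited theorem.
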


\begin{definition}[\cite{PAG2004}, vol. II, Definition 9.1.7] We say that $D=\sum D_i$ is a simple normal crossing  divisor (or an SNC divisor for short) if $D_i$ is smooth for each $i$, and  $D$ is defined in a neighbourhood of any point in local coordinates $(z_1,\ldots, z_n)$ as $z_1\cdot\ldots\cdot z_k=0$ for some $k\leqslant n$.
\end{definition}

\begin{theorem}[Norimatsu Lemma; \cite{PAG2004}, vol. I, Vanishing Theorem 4.3.5]
 Let $D$ be an ample divisor on $X$ and let $F$ be an SNC divisor on $X$. Then
$$H^i(X, K_X+D+F)=0 \quad \text{ for } i>0.$$
\end{theorem}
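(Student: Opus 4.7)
The plan is to prove the Norimatsu Lemma by induction on the number of irreducible components of the SNC divisor $F$, using the Kawamata-Viehweg Theorem stated immediately above as the base case.

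Write $F = F_1 + \cdots + F_s$. The base case $s = 0$ (so $F = 0$) is immediate: an ample divisor is nef and big, so Kawamata-Viehweg yields $H^i(X, K_X + D) = 0$ for all $i > 0$, uniformly in the dimension of $X$.

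For the inductive step I would consider the short exact sequence of sheaves associated with the smooth component $F_s$:
\begin{equation*}
0 \longrightarrow \Oo_X(K_X + D + F - F_s) \longrightarrow \Oo_X(K_X + D + F) \longrightarrow \Oo_{F_s}((K_X + D + F)|_{F_s}) \longrightarrow 0.
\end{equation*}
For the left-hand term, $F - F_s$ is an SNC divisor with $s-1$ components on $X$, so the inductive hypothesis applied on $X$ kills its higher cohomology. For the right-hand term, adjunction yields $(K_X + F_s)|_{F_s} = K_{F_s}$, so the restriction becomes $K_{F_s} + D|_{F_s} + (F - F_s)|_{F_s}$. Here $D|_{F_s}$ is ample on the smooth projective variety $F_s$, and $(F - F_s)|_{F_s} = \sum_{i \neq s} F_i|_{F_s}$ is SNC on $F_s$ with $s-1$ components (directly from the local coordinate description in the definition of SNC). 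The inductive hypothesis applied on $F_s$ (of strictly smaller dimension) kills its higher cohomology. The long exact sequence in cohomology then yields $H^i(X, K_X + D + F) = 0$ for $i > 0$, completing the induction.

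The main obstacle is setting up the induction correctly: the statement must be formulated for smooth projective varieties of arbitrary dimension at each stage, since passing to $F_s$ strictly decreases the dimension while the first term remains on $X$ with one fewer component. The essential geometric verifications are that ampleness of $D|_{F_s}$ and the SNC property of $(F - F_s)|_{F_s}$ both persist upon restriction to $F_s$; both follow immediately from the hypotheses, so the induction goes through cleanly.
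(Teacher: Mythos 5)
The paper offers no proof of this statement --- it is quoted as a black box, with a reference to Lazarsfeld, \emph{Positivity in Algebraic Geometry} I, Theorem 4.3.5 --- so there is no internal argument to compare against. Your proof is correct and is essentially the standard one found in that reference: Kodaira/Kawamata--Viehweg vanishing as the base case, and for the inductive step the restriction sequence $0 \to \mathcal{O}_X(K_X+D+F-F_s) \to \mathcal{O}_X(K_X+D+F) \to \mathcal{O}_{F_s}(K_{F_s}+D|_{F_s}+(F-F_s)|_{F_s}) \to 0$, with adjunction on the smooth component $F_s$. One small imprecision: $(F-F_s)|_{F_s}$ need not have exactly $s-1$ irreducible components (an intersection $F_i\cap F_s$ may be empty or reducible), so an induction on the number of components alone would not literally close on the right-hand term. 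This is harmless because, as you note in your final paragraph, the statement is formulated for smooth projective varieties of every dimension and the dimension strictly drops when passing to $F_s$; inducting on the pair (dimension, number of components) makes the argument airtight. The two geometric verifications you flag --- ampleness of $D|_{F_s}$ and the SNC property of the residual divisor on $F_s$, the latter read off from the local equation $z_1\cdots z_k=0$ --- are exactly where the hypotheses are used, and both are fine.
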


\section{Hyperelliptic surfaces}

First let us recall the definition of a hyperelliptic surface.
\begin{definition}
A hyperelliptic surface $S$ (sometimes called bielliptic) 
is a surface with Kodaira  dimension equal to $0$ and  irregularity  $q(S)=1$.
\end{definition}
 
Alternatively (see \cite{Bea1996}, Definition VI.19), a surface $S$ is hyperelliptic if  $S\cong (A\times B)/G$, where $A$ and $B$ are elliptic curves, and $G$ is an abelian group acting on A by translation and acting on B, such that  $A/G$ is an elliptic curve and $B/G\cong \mathbb{P}^1$. $G$ acts on $A\times B$ coordinatewise.
Hence we have the following situation:
$$
\begin{CD}
S\cong (A\times B)/G @>\Phi>> A/G @.\\
@V\Psi VV @.\\
B/G\cong \P1
\end{CD}
$$
where $\Phi$ and $\Psi$ are the natural projections.

Hyperelliptic surfaces were classified at the beginning of 20th century by G. Bagnera and M.~de Franchis in \cite{BF1907}, and independently by F. Enriques i F. Severi in \cite{ES1909-10}. They showed that there are seven non-isomorphic types 
of hyperelliptic surfaces, characterised by the action of $G$ on $B\cong\Ce/(\Zet\omega\oplus\Zet)$ (for details see e.g. \cite{Bea1996}, VI.20). For each hyperelliptic surface we have that the canonical divisor $K_S$ is numerically trivial.

In 1990 F.~Serrano in \cite{Se1990}, Theorem 1.4, characterised the group of classes of numerically equivalent divisors $\Num(S)$
for each of the  surface's type:
\begin{theorem}[Serrano] A basis of the group  $\Num(S)$
for each of the hyperelliptic surface's type and the multiplicities of the singular fibres in each case are the following:
$$
\begin{array}{c|l|l|l}
\text{Type of a hyperelliptic surface}&G&m_1,\ldots,m_s&\text{Basis of $\Num(S)$}\\
\hline
1&\Zet_2&2,2,2,2&A/2, B\\
2&\Zet_2\times\Zet_2&2,2,2,2&A/2, B/2\\
3&\Zet_4&2,4,4&A/4, B\\
4&\Zet_4\times\Zet_2&2,4,4&A/4, B/2\\
5&\Zet_3&3,3,3&A/3, B\\
6&\Zet_3\times\Zet_3&3,3,3&A/3, B/3\\
7&\Zet_6&2,3,6&A/6, B
\end{array}
$$
\end{theorem}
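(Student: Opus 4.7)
\emph{Proof plan.} I would work with the two natural elliptic fibrations $\Phi\colon S\longrightarrow A/G$ and $\Psi\colon S\longrightarrow B/G\cong\P1$, and recover $\Num(S)$ from the classes of their fibres. Since hyperelliptic surfaces have Picard number $2$, it is enough to produce two classes in $\Num(S)$ whose intersection form has determinant $\pm 1$; such a pair automatically spans a saturated sublattice of full rank, hence all of $\Num(S)$.

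First I would analyse the singular fibres of $\Psi$. Scheme-theoretically, the fibre of $\Psi$ over $[b]\in B/G$ equals $A/\mathrm{Stab}_G(b)$ counted with multiplicity $|\mathrm{Stab}_G(b)|$. Going case by case through the seven actions of $G$ on $B$ as recorded in Bagnera--de Franchis (cf.\ \cite{Bea1996}, VI.20), one reads off the orders of the stabilisers at the exceptional points of the action and recovers the column $m_1,\dots,m_s$ of the table. The same analysis applied to $\Phi$ exploits that $G$ acts on $A$ by translation, so stabilisers on $A$ are non-trivial only for the subgroup $G_0\subset G$ which acts trivially on $A$; the order $|G_0|$ is $>1$ precisely in types $2,4,6$, yielding the factor $k_2>1$ in the second basis vector $B/k_2$.

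Next I would identify the primitive classes. Each multiple fibre of $\Psi$ of multiplicity $m_i$ gives, after dividing by $m_i$, an integral divisor class numerically equivalent to the general fibre scaled by $1/m_i$. Since all fibres of $\Psi$ are numerically equivalent, the primitive class in this direction is $A/k_1$ with $k_1=\lcm(m_1,\dots,m_s)$, and a direct tabulation shows that $k_1$ coincides with the first entry in Serrano's basis in each of the seven cases. The analogous argument for $\Phi$ furnishes the second basis vector.

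Finally, intersection numbers close the argument. Using the \'etale cover $p\colon A\times B\longrightarrow S$ of degree $|G|$, the projection formula gives $A^2=B^2=0$ and $A\cdot B=|G|$ in $\Num(S)$, so
$$
(A/k_1)\cdot(B/k_2)=\frac{|G|}{k_1k_2}=1
$$
in every case (one verifies $|G|=k_1k_2$ type by type). The resulting $2\times 2$ intersection form has determinant $-1$, which proves that $A/k_1$ and $B/k_2$ span all of $\Num(S)$. The main obstacle lies in the first step --- organising the stabiliser analysis uniformly across the seven actions, which is where most of the combinatorial content of the Bagnera--de Franchis classification enters the picture.
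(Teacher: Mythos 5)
This statement is quoted from Serrano (\cite{Se1990}, Theorem 1.4) and is not proved in the paper, so your proposal can only be measured against Serrano's argument. Parts of your plan are sound: the stabiliser analysis of the $G$-action on $B$ does give the multiplicities $m_1,\dots,m_s$ of the multiple fibres of $\Psi$; the reduced multiple fibres have classes $A/m_i$, which generate $\frac{1}{\lcm(m_i)}\Zet\cdot A$; the intersection numbers $A^2=B^2=0$, $A\cdot B=|G|$ follow from the projection formula as you say; and the closing observation that two integral classes with intersection matrix of determinant $\pm1$ must generate all of $\Num(S)$ is correct.

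The gap is in the production of the second basis vector. In the Bagnera--de Franchis normal form the homomorphism $G\to A$ recording the translations is injective: if some $g\neq e$ acted trivially on $A$, it would have to act on $B$ by a nonzero translation (freeness of the action on $A\times B$), and dividing by $\langle g\rangle$ first would present $S$ as a quotient of $A\times(B/\langle g\rangle)$ by a smaller group, i.e.\ as a surface of a different type. Hence $G$ acts freely on $A$, the subgroup $G_0$ you invoke is trivial in \emph{all} seven types, $\Phi\colon S\to A/G$ is an elliptic bundle with every fibre reduced and isomorphic to $B$, and the class $B/k_2$ cannot arise as a reduced multiple fibre of $\Phi$. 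Indeed the paper itself records, citing \cite{Ap1998}, that $b\cdot(\mu/\gamma)B$ is effective only when $b\cdot(\mu/\gamma)\in\En$, so in types $2,4,6$ the class $(\mu/\gamma)B$ is integral but \emph{not} effective; exhibiting it inside $\Num(S)$ is the real content of the theorem and is exactly what your sketch omits. It also cannot be recovered by lattice theory alone: for type 2 one has $\Zet(A/2)+\Zet B\subseteq\Num(S)\subseteq\frac14(\Zet A+\Zet B)$, and there are several unimodular lattices in between (for instance $\Zet(A/4)\oplus\Zet B$, or the lattice generated by $A/4+B/2$ and $A/2$), all with determinant $-1$. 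Singling out $\Zet(A/\mu)\oplus\Zet\big((\mu/\gamma)B\big)$ requires further geometric input --- Serrano obtains it from the structure of $\Pic(S)$ via the \'etale cover $A\times B\to S$; compare also the Appell--Humbert description of $\Pic(S)$ in \cite{Ap1998}.
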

Let $\mu=\lcm\{m_1, \ldots, m_s\}$ and let $\gamma=|G|$. Given a hyperelliptic surface, its basis of  $\Num(S)$ consists of divisors $A/\mu$ and $\left(\mu/\gamma\right) B$. 

We say that $L$ is a line bundle of type $(a,b)$ on a hyperelliptic surface if $L\equiv a\cdot A/\mu+b\cdot(\mu/\gamma) B$.
In  $\Num(S)$  we have that $A^2=0$, $B^2=0$, $AB=\gamma$. 
Note that a divisor  $b\cdot\left(\mu/\gamma\right) B\equiv (0,b)$, $b\in\Zet$, is effective if and only if  $b\cdot\left(\mu/\gamma\right)\in \En$ (see \cite{Ap1998}, Proposition~5.2).

The following proposition holds:
\begin{proposition}[see \cite{Se1990}, Lemma 1.3]\label{Ser2}
Let $D$ be a divisor of type $(a,b)$ on a hyperelliptic surface $S$. Then
\begin{enumerate}
\item  $\chi(D)=ab$;
\item $D$  is ample if and only if $a>0$ and  $b>0$;
\item If $D$  is ample then $h^0(D)=\chi(D)=ab$.
\end{enumerate}
\end{proposition}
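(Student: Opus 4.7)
For part (1), my plan is to apply Riemann--Roch on the surface $S$: $\chi(D)=\chi(\Oo_S)+\tfrac{1}{2}(D^2-D\cdot K_S)$. A hyperelliptic surface has $q(S)=1$ and $p_g(S)=0$, so $\chi(\Oo_S)=1-q+p_g=0$. Since $K_S\equiv 0$ the correction term $D\cdot K_S$ vanishes numerically. Writing $D\equiv a\cdot(A/\mu)+b\cdot((\mu/\gamma)B)$ and using the intersection relations $A^2=B^2=0$ and $AB=\gamma$, the only surviving cross term gives $D^2=2ab\cdot\frac{1}{\mu}\cdot\frac{\mu}{\gamma}\cdot\gamma=2ab$. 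Hence $\chi(D)=ab$.

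For part (2) I would use the Nakai--Moishezon criterion on surfaces. For the forward direction, it is enough to test the positivity of $D$ against effective curves in the classes $A$ and $B$ (which exist as multiples of general fibres of $\Phi$ and $\Psi$): a direct computation yields $D\cdot A=b\mu$ and $D\cdot B=a\gamma/\mu$, so $D$ ample forces $a,b>0$. For the converse, assume $a,b>0$. Then $D^2=2ab>0$ by part~(1). Any irreducible curve $C$ has a numerical class $[C]\equiv\alpha(A/\mu)+\beta((\mu/\gamma)B)$; intersecting $C$ with the nef classes $A/\mu$ and $(\mu/\gamma)B$ (nef because they are positive rational multiples of fibre classes of the elliptic fibrations $\Phi$ and $\Psi$) yields $\beta=(A/\mu)\cdot C\geq 0$ and $\alpha=((\mu/\gamma)B)\cdot C\geq 0$, not both zero. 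Therefore $D\cdot C=a\beta+b\alpha>0$, and Nakai--Moishezon gives that $D$ is ample.

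For part (3), granted (1) it suffices to show $h^1(D)=h^2(D)=0$. By Serre duality, $h^2(D)=h^0(K_S-D)$; since $K_S-D$ has numerical type $(-a,-b)$ with $a,b>0$, the intersection $(K_S-D)\cdot(A/\mu)=-b<0$ with the nef class $A/\mu$ obstructs effectivity, forcing $h^0(K_S-D)=0$. For $h^1(D)$ I would write $D=K_S+(D-K_S)$; because $K_S\equiv 0$, the divisor $D-K_S$ is numerically equivalent to the ample $D$, hence nef and big, and Kawamata--Viehweg vanishing yields $H^i(S,D)=H^i(S,K_S+(D-K_S))=0$ for all $i>0$. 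Combining the two vanishings with part (1) gives $h^0(D)=\chi(D)=ab$.

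The main obstacle here is the ampleness argument in part (2): everything hinges on knowing that the basis elements $A/\mu$ and $(\mu/\gamma)B$ are nef, which itself rests on interpreting them as positive rational multiples of the fibre classes of the two elliptic fibrations arising from the quotient description $S\cong(A\times B)/G$. The Riemann--Roch computation and the vanishing arguments, by contrast, follow almost mechanically once $K_S\equiv 0$, $\chi(\Oo_S)=0$, and the intersection numbers are in hand.
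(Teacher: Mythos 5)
The paper does not prove this proposition at all --- it is imported verbatim from Serrano \cite{Se1990}, Lemma 1.3 --- so there is no in-paper argument to compare against. Your proof is correct and is essentially the standard one: Riemann--Roch with $\chi(\Oo_S)=0$ and $K_S\equiv 0$ for (1), Nakai--Moishezon for (2), and Serre duality plus Kawamata--Viehweg (or Kodaira, since $D-K_S$ is numerically equivalent to the ample $D$ and ampleness is numerical) for (3). You correctly identify the one point needing care: the basis classes $A/\mu$ and $(\mu/\gamma)B$ need not be effective integral divisors for every type (e.g.\ $(\mu/\gamma)B=\tfrac12 B$ on type 2, which the paper itself notes is not effective), but nefness is all that is used and it is inherited from the fibre classes of $\Phi$ and $\Psi$; combined with the fact that an irreducible curve has nonzero numerical class, this gives $\alpha,\beta\geq 0$ not both zero and hence $D\cdot C=a\beta+b\alpha>0$. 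No gaps.
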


\vskip 5pt
Now we recall a bound for the self-intersection of a curve.
The adjunction formula, applied to the normalisation of a curve $C$, implies the following formula:
\begin{proposition}[Genus formula, \cite{GH1978}, Lemma, p. 505]\label{gen_form} Let $C$ be a curve on a surface $S$, passing through $x_1$, $\ldots$, $x_r$ with multiplicities respectively $m_1$, $\ldots$, $m_r$. Let $g(C)$ denote the genus of the normalisation of $C$.
Then
$$g(C)\leq \frac{C^2+C.K_{S}}{2}+1-\sum_{i=1}^r\frac{m_i(m_i-1)}{2}.$$
\end{proposition}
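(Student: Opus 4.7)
The plan is to derive the inequality from the adjunction formula on $S$, relating the arithmetic genus of $C$ to the geometric genus of its normalisation via the singularity invariants at $x_1, \ldots, x_r$.

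First, I would apply the adjunction formula on the smooth surface $S$ to the (possibly singular) curve $C$. This gives the arithmetic genus
$$p_a(C) = 1 + \frac{C^2 + C \cdot K_S}{2}.$$
Since $C$ is irreducible, $p_a(C)$ is a well-defined nonnegative integer which one can read off from the Hilbert polynomial of $\Oo_C$.

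Second, I would invoke the standard comparison between arithmetic and geometric genera for an irreducible curve: if $\nu \colon \widetilde{C} \to C$ denotes the normalisation, then
$$p_a(C) = g(\widetilde{C}) + \sum_{p \in \mathrm{Sing}(C)} \delta_p,$$
where $\delta_p = \dim_{\Ce}\bigl(\nu_* \Oo_{\widetilde{C}}/\Oo_C\bigr)_p$ is the delta invariant. This identity follows by taking Euler characteristics in the short exact sequence $0 \to \Oo_C \to \nu_* \Oo_{\widetilde{C}} \to \nu_*\Oo_{\widetilde{C}}/\Oo_C \to 0$ and noting that $\chi(\Oo_{\widetilde{C}}) = 1 - g(\widetilde{C})$ while $\chi(\Oo_C) = 1 - p_a(C)$.

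Third, I would prove the local bound $\delta_p \geq \binom{\mult_p(C)}{2}$ whenever $p$ is a point of multiplicity $m_p$ on $C$. The cleanest way is to blow up $p$ and use induction: if $\sigma \colon S' \to S$ is the blow-up at $p$ with exceptional divisor $E$ and $C'$ denotes the strict transform of $C$, then $\sigma^* C = C' + m_p E$, so
$$p_a(C) - p_a(C') = \frac{m_p(m_p - 1)}{2},$$
and the singular points of $C'$ lying above $p$ contribute further nonnegative $\delta$-invariants. Iterating the desingularisation proves the claim.

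Combining these three ingredients,
$$g(\widetilde{C}) = p_a(C) - \sum_{p}\delta_p \leq 1 + \frac{C^2 + C \cdot K_S}{2} - \sum_{i=1}^r \frac{m_i(m_i-1)}{2},$$
which is the desired inequality. The only delicate step is the third one: one must handle infinitely near singularities carefully, since a point of multiplicity $m_i$ may produce additional singular points on the strict transform whose $\delta$-invariants are dropped when we only retain the bound $\binom{m_i}{2}$. This is exactly why the statement is an inequality rather than an equality, and it is the main technical obstacle in writing out a complete self-contained proof.
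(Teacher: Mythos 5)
Your proposal is correct, and it fills in precisely the argument the paper only gestures at: the paper gives no proof of Proposition \ref{gen_form}, merely citing \cite{GH1978} and remarking that the statement follows from the adjunction formula applied to the normalisation of $C$, which is exactly the route you take (adjunction for $p_a(C)$, the identity $p_a(C)=g(\widetilde{C})+\sum_p\delta_p$, and the local bound $\delta_p\geq m_p(m_p-1)/2$ via blowing up). Your closing remark correctly identifies why the statement is an inequality rather than an equality, and the three ingredients you list are all standard, so the argument is complete at the level of detail appropriate here.
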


Note that:
\begin{observation}\label{gen_form_hyperell}
A curve $C$ on a hyperelliptic surface has  genus at least 
 $1$. Indeed, otherwise the normalisation of $C$, of genus zero, would be a covering (via  $\Phi$) of an elliptic curve  $A/G$. This contradicts the Riemann-Hurwitz formula.
\end{observation}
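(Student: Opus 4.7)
The plan is to argue by contradiction: assume the normalisation $\nu\colon \tilde C \to C$ satisfies $g(\tilde C)=0$, so that $\tilde C\cong \P1$, and derive a contradiction by means of the elliptic projection $\Phi\colon S\to A/G$ combined with the Riemann-Hurwitz formula. Recall from the structure theorem that $A/G$ is itself an elliptic curve, which is what makes this strategy available.

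First I would compose $\nu$ with the inclusion $C\hookrightarrow S$ and $\Phi$ to obtain a morphism
$$f\colon \tilde C\longrightarrow A/G.$$
If $f$ is non-constant, then since both source and target are smooth projective curves, $f$ is a finite cover of some degree $d\geq 1$. The Riemann-Hurwitz formula gives
$$-2\;=\;2g(\tilde C)-2\;=\;d\bigl(2g(A/G)-2\bigr)+\deg R\;=\;\deg R \;\geq\; 0,$$
which is the desired contradiction.

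The remaining case is that $f$ is constant, i.e.\ $C$ lies inside a single fibre of $\Phi$. This case needs a short separate argument, and I expect it to be the main (albeit mild) obstacle. One would use the fact that $S=(A\times B)/G$ is smooth, which forces the action of $G$ on $A\times B$ to be free; hence the stabiliser in $G$ of any point $a\in A$ acts freely on $\{a\}\times B$, so it acts on $B$ by translations. Consequently every fibre of $\Phi$ has reduced support isomorphic to an elliptic curve (a quotient of $B$ by a translation subgroup). Since $C$ is irreducible and reduced, it must coincide with this support, giving $g(C)=1$, again contradicting $g(\tilde C)=0$.

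In both cases we obtain $g(\tilde C)\geq 1$, proving the observation. The only place where one has to be slightly careful is the analysis of fibres of $\Phi$; once this is recorded, the bulk of the argument is the one-line Riemann-Hurwitz computation already indicated by the author.
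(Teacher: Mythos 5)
Your proposal is correct and follows the paper's own argument: compose the normalisation with $\Phi$ and apply Riemann--Hurwitz to a would-be map $\mathbb{P}^1\to A/G$. The only addition is your explicit treatment of the case where $\Phi$ is constant on $C$ (i.e.\ $C$ is a fibre of $\Phi$), which the paper leaves implicit; there the fibre is isomorphic to the elliptic curve $B$ (the stabiliser is in fact trivial since $G$ acts on $A$ by translations), so $C$ has genus $1$ directly and the conclusion still holds.
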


\section{Main result}

Our main result is the following
\begin{theorem}\label{k-dzet-szer}
Let $S$ be a hyperelliptic surface. Let $L$ be a line bundle of type  $(m,m)$ with $m\geq k+2$ on $S$. Then $L$ is  $k$-jet ample.
\end{theorem}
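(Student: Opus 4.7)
The plan is to convert $k$-jet ampleness of $L$ into a single cohomological vanishing on a blow-up, and then establish that vanishing by Kawamata--Viehweg, resorting to Norimatsu's Lemma for the configurations that Kawamata--Viehweg alone cannot handle. Fix distinct points $x_1,\dots,x_r \in S$ and positive integers $k_i$ with $\sum k_i = k+1$, and let $\pi \colon \widetilde{S} \to S$ be the blow-up at these points with exceptional divisors $E_1,\dots,E_r$. Standard direct-image identities ($\pi_*\Oo(-\sum k_i E_i) = \prod m_{x_i}^{k_i}$, together with $R^1\pi_*\Oo(-\sum k_i E_i) = 0$, which holds because $\Oo(-\sum k_j E_j)|_{E_i}\cong\Oo_{\P1}(k_i)$) reduce the surjectivity in the definition of $k$-jet ampleness to the single vanishing
$$H^1\bigl(\widetilde{S},\,\pi^*L - \textstyle\sum_i k_i E_i\bigr) = 0.$$
Since $K_S\equiv 0$ on any hyperelliptic surface, $K_{\widetilde S}\equiv\sum E_i$, and the above is equivalent to $H^1(\widetilde S,\,K_{\widetilde S} + D) = 0$ with $D := \pi^*L - \sum(k_i+1)E_i$.

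Bigness of $D$ is a short computation: $L^2 = 2m^2$ (from $A^2=B^2=0$ and $AB=\gamma$), while a convexity argument shows $\sum(k_i+1)^2 \le (k+2)^2$ (the maximum being attained at $r=1$, $k_1=k+1$), hence $D^2 \ge 2m^2 - (k+2)^2 \ge (k+2)^2 > 0$ for $m \ge k+2$. Nefness against each exceptional $E_j$ is automatic since $D\cdot E_j = k_j+1 \ge 2$, so the genuine content of nefness is the inequality
$$m(a+b) \;\ge\; \textstyle\sum_i(k_i+1)\,m_i$$
for every irreducible $C \subset S$ of type $(a,b)$ with multiplicities $m_i := \mult_{x_i}C$. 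Observation~3.4 combined with the genus formula (using $K_S \equiv 0$) yields the key bound $\sum m_i(m_i-1) \le 2ab$. In the single-point regime $r=1$ this gives $a+b \ge m_1$ directly by AM-GM (for $m_1 \ge 2$; $m_1\le 1$ is trivial); in the multi-point regime with $ab>0$ the same bound, carefully used to separate the few $m_i\ge 2$ from the many $m_i=1$, still closes the estimate; and when $ab=0$ the curve $C$ must be a component of a fibre of $\Phi$ or of $\Psi$, the genus bound forces every $m_i\le 1$, and the inequality collapses to a combinatorial count of how many of the chosen points can lie on a single fibre.

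That last combinatorial count is the principal obstacle: in the Serrano types with $\gamma/\mu=1$, a fibre $F$ of $\Phi$ has type $(0,1)$ and $LC=m$, so if several $x_i$ lie on the same $F$ the inequality $m \ge \sum_{x_i\in F}(k_i+1)$ genuinely fails when $m=k+2$, and Kawamata--Viehweg does not apply. The plan for these configurations is to use Norimatsu's Lemma instead. For each bad fibre $F$ one writes
$$\pi^*L - \textstyle\sum(k_i+1)E_i \;\equiv\; A + \widetilde{F} + E',$$
where $A$ is a combination of $\pi^*(L-F)$ with suitable $-E_i$'s, chosen ample on $\widetilde{S}$ (this works because $L-F$ still has strictly positive bidegree on $S$ when $m\ge k+2$ and $\gamma/\mu=1$, so $(L-F)^2 = 2m(m-1) > 0$ and the curve-by-curve nefness can be upgraded to ampleness on the blow-up), $\widetilde{F}$ is the smooth strict transform of $F$, and $E'$ is a sum of exceptional components so arranged that $\widetilde{F}+E'$ is an effective SNC divisor. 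Norimatsu's Lemma applied to $K_{\widetilde S} + A + (\widetilde F + E')$ then yields the required vanishing. Exhibiting such a decomposition for every bad configuration across the seven Serrano types, and checking ampleness of $A$ in each, is the substantive bookkeeping where the bulk of the proof lives.
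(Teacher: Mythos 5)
Your overall architecture coincides with the paper's: reduce $k$-jet ampleness to the vanishing $H^1\bigl(K_{\widetilde{S}}+\pi^*L-\sum_i(k_i+1)E_i\bigr)=0$ on the blow-up, apply Kawamata--Viehweg when that twist is big and nef, and otherwise peel off the strict transform of the offending fibre as an SNC divisor and apply Norimatsu to an ample residue. Your bigness computation is correct, and your single-point nefness argument (deriving $m_1\le\alpha+\beta$ from the genus formula) is sound --- it is essentially the content of the Seshadri-constant bound the paper imports from \cite{Fa2015}.

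The gap is the sentence asserting that for non-fibre curves the genus bound, ``carefully used to separate the few $m_i\ge 2$ from the many $m_i=1$, still closes the estimate.'' This is precisely the step where the paper cannot get by on the genus formula alone. That formula gives only $\sum_i m_i(m_i-1)\le 2\alpha\beta$, hence for an individual point merely $m_i\le\tfrac12\bigl(1+\sqrt{1+8\alpha\beta}\bigr)\approx(\alpha+\beta)/\sqrt{2}$, which is too weak for the reductions one needs once the weights $k_i$ are unevenly distributed; the paper therefore introduces an independent geometric input --- B\'ezout against auxiliary divisors in $|(4,4)|$ forced (since $h^0=16$) to have multiplicity $5$ at one point, or multiplicities $(4,2)$ resp.\ $(3,3,2)$ at two resp.\ three of the $x_i$ --- and still needs the long case analysis of Proposition~\ref{C_nie_wlokno}, Lemma~\ref{alfa>4}, Lemma~\ref{alfa<=4} and the table of low-bidegree curves. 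Moreover your estimate is tight: for $C\equiv(1,1)$, $r=2$, $m_1=2$, $m_2=1$, $k_1=k$, $k_2=1$ (allowed by the genus formula) one gets $\sum_i(k_i+1)m_i=2k+4=(k+2)(\alpha+\beta)$, so nefness holds only with equality, and the strict inequalities required for ampleness of your divisor $A$ in the Norimatsu cases then survive only because the case hypotheses force the heavily weighted points onto the distinguished fibre --- an interaction you would have to track explicitly rather than absorb into ``bookkeeping.'' So the plan is the right one, but the part you defer contains the paper's main technical idea and the bulk of its proof.
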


By the results of M. Mella and M.~Palleschi, see \cite{MP1993}, Theorems 3.2-3.4, we know that $L\equiv (a, b)$ with at least one of the coefficients  strictly smaller than $k+2$ is not $k$-very ample on an arbitrary hyperelliptic surface, in particular it is not $k$-very ample on a hyperelliptic surface of type 1. A line bundle which is not $k$-very ample is not $k$-jet ample. Therefore the line bundle $L\equiv (k+2,k+2)$ is the first natural object of study.

\begin{proof}
We will prove that $L\equiv (k+2,k+2)$ is $k$-jet ample and as a consequence we will get that a line bundle of type $(m,m)$ with $m\geq k+2$ is $k$-jet ample.

Let $r\geq 1$. We have to check that for each choice of distinct points $x_1, \ldots, x_r\in S$ 
the map $$H^0(X,L) \longrightarrow H^0\left(X,L\otimes  \Oo_{X}/(m_{x_1}^{k_1}\otimes\ldots\otimes m_{x_r}^{k_r})\right)$$
is surjective, where $\sum_{i=1}^r k_i=k+1$.

\vskip
 5pt

We consider the standard exact sequence:
\small
$$0\longrightarrow (K_S+L)\otimes m_{x_1}^{k_1}\otimes\ldots\otimes
 m_{x_r}^{k_r}\longrightarrow K_S+L \longrightarrow (K_S+L)\otimes \Oo_{X}/(m_{x_1}^{k_1}\otimes\ldots\otimes m_{x_r}^{k_r})\longrightarrow 0.$$
\normalsize

By the long sequence of
 cohomology, surjectivity of the map
$$H^0(K_S+L) \longrightarrow H^0\left((K_S+L)\otimes  \Oo_{X}/(m_{x_1}^{k_1}\otimes\ldots\otimes m_{x_r}^{k_r})\right)$$  is implied by vanishing of 
 $H^1\left((K_S+L)\otimes m_{x_1}^{k_1}\otimes\ldots\otimes m_{x_r}^{k_r}\right)$.

By the projection formula, we have that
$$H^1 \left((K_S+L)\otimes m_{x_1}^{k_1}\otimes\ldots\otimes m_{x_r}^{k_r}\right)\cong
H^1 \left(\pi^*(K_S+L)-\sum_{i=1}^r k_i E_i\right)\cong$$
$$H^1 \left(K_{\widetilde{S}}-\sum_{i=1}^r E_i+\pi^*L-\sum_{i=1}^r k_i E_i\right)\cong H^1 \left(K_{\widetilde{S}}+\pi^*L-\sum_{i=1}^r (k_i+1) E_i\right).$$

We will show that
  $H^1 \left(K_{\widetilde{S}}+\pi^*L-\sum_{i=1}^r (k_i+1) E_i\right)=0$, using vanishing theorems.

We consider separately case $r=1$, and separately case $r\geq 2$.

\vskip 5pt
First  let $r=1$. 
We show that  $\pi^*L-(k+2)E$ is nef and big, hence by 
 Kawamata-Viehweg vanishing theorem we get that  $H^1 (K_{\widetilde{S}}+\pi^*L-(k+2)E)=0$.

We have that
$$\pi^*L-(k+2)E=\pi^*((k+2,k+2))-(k+2)E=
(k+2)\left(\pi^*(1,1)-E\right).$$

By \cite{Fa2015}, Theorem 3.1,  we know that on a hyperelliptic surface the Seshadri constant of a line bundle of type  $(1,1)$ at an arbitrary point $x$ is at least $1$.
Therefore $$\sup \left\{\varepsilon: \pi^*L-\varepsilon E \text{ is nef} \right\}=
\varepsilon(L,x)=(k+2)\cdot\varepsilon\left((1,1),x\right)\geq k+2,$$
hence the line bundle $\pi^*L-(k+2) E$ is nef. Thus to prove that   $\pi^*L-(k+2) E$ is also big, it is enough to show that $\left(\pi^*L-(k+2)E\right)^2> 0$, which is equivalent to prove that $L^2>(k+2)^2$. The last inequality holds, as $$L^2=(k+2,k+2)^2=2(k+2)\cdot(k+2)=2(k+2)^2.$$

The case $r=1$ is proved.

\vskip 5pt
Now let $r\geq 2$. We will prove that $H^1 \left(K_{\widetilde{S}}+\pi^*L-\sum_{i=1}^r (k_i+1) E_i\right)=0$. The proof will be divided in several cases, depending on the position of points $x_1$, $\ldots$, $x_r$.

Let $k=1$. Generation of $1$-jets is by definition equivalent to $1$-very ampleness. The line bundle $L\equiv (3,3)$ is $1$-very ample on any hyperelliptic surface by \cite{MP1993}, Theorems 3.2-3.4.

Let $k\geq 2$.

If $k_i=0$ for some $i$, then we can consider points $x_1$, $\ldots$, $x_{i-1}$, $x_{i+1}$, $\ldots$, $x_r$. In this case without lose of generality we may take a smaller  $r$. From now on we assume that 
 $k_i\geq 1$ for every $i$. Obviously,  $r\in[2,k+1]$ as $\sum_{i=1}^r k_i=k+1$.

For simplicity, we present a proof for hyperelliptic surfaces of type 1. For surfaces of other types the proof is analogous. The small differences are listed in  Remark \ref{uw_idzie_na_innych_S}.

\vskip 5pt
The proof consists of a few steps which we describe briefly
before we turn to the details.

First, in Case I, we consider a situation where on each singular fibre $A/2$, on each fibre $A$, and on each fibre $B$ there are points $x_i$
with the sum of multiplicities $k_i$ equal to at most~$\frac{k+1}2$.

Then, in Cases II and III, we consider  a situation where there exists a fibre $A/2$, respectively $A$, on which there are some points $x_i$
with the sum of multiplicities $k_i$ greater than $\frac{k+1}2$.

In both cases we have two possibilities: (a) the sum of multiplicities of points lying on any fibre $B$ is smaller than  $\frac{k+1}2$; (b) there exists a fibre $B$ for which the sum of multiplicities of points on this fibre is at least $\frac{k+1}2$. Therefore we divide  Cases  II and III into two subcases: respectively IIa, IIb and IIIa, IIIb.

Finally, in Case IV we consider the situation where some points $x_i$ lie on a fixed fibre $B$ and their sum of multiplicities does not exceed $\frac{k+1}2$, moreover for each fibre $A/2$ and for each fibre $A$ the sum of multiplicities of points lying on this fibre is at most $\frac{k+1}2$. This covers all possibilities.

In all the cases described above we prove that $H^1 \left(K_{\widetilde{S}}+\pi^*L-\sum_{i=1}^r (k_i+1) E_i\right)=0$, using Kawamata-Viehweg vanishing theorem in Cases I and IIIa, and Norimatsu lemma in Cases IIa, IIb, IIIb and IV. In Cases I and IIIa we show that a divisor $M=\pi^*L-\sum_{i=1}^r (k_i+1) E_i$ is big and nef, while in Cases  IIa, IIb, IIIb and IV we define an appropriate SNC divisor $F$ and prove that a divisor $N=M-F$ is ample, using Nakai-Moishezon criterion. 

In all the cases by $C\equiv (\alpha,\beta)$ we denote a reduced irreducible curve on $S$ passing through $x_1$, $\ldots$, $x_r$ with multiplicities respectively $m_1$, $\ldots$, $m_r$, where $m_i\geq 0$ for all $i$, and there exists  $j$ with $m_j>0$. We have $\widetilde{C}=\pi^*C-\sum_{i=1}^r m_iE_i$.

Define $k_i^W=\left\{\begin{array}{l l}k_i & \text{if } x_i\in W \text{ for a reduced fibre } W \text{ of } \Phi \text{ or } \Psi,\\
0&\text{otherwise.}\end{array}\right.$

Let $r_W$ be the number of points from the set $\{x_1, \ldots, x_r\}$ which are contained in $W$.

Let us now move on to considering the cases in more detail.

\vskip 5pt
\textbf{Case I.} For an arbitrary fibre $W$ (where $W=A/2$, or $W=A$, or $W=B$) 
the sum of multiplicities of the points $x_i$ lying on this fibre 
is at most $\frac{k+1}2$, i.e.  $$\sum_{i=1}^{r_W}k_i^W\leq \frac{k+1}2.$$

Since $\sum_{i=1}^{r_W}k_i^W\leq \frac{k+1}2$, in particular we have that  $r_W\leq \frac{k+1}2$.

We will show that the line bundle $M=\pi^*L-\sum_{i=1}^r (k_i+1) E_i$ is big and  nef. 

\begin{lemma}
$M$ is a nef line bundle.
\begin{proof}
 We ask whether  $M\widetilde{C}\geq 0$. We have to check that
$$(\star)=M\widetilde{C}=\left(\pi^*L-\sum_{i=1}^r (k_i+1) E_i\right).\left(\pi^*C-\sum_{i=1}^r m_iE_i\right)  \geq  0$$

Let us consider the following cases:

(1) $C=A/2$, or $C=B$, or  $C=A$. Then for  $i=1, \ldots, r$ we have  $m_i=1$, hence
$$(\star)\geq LC-\sum_{i=1}^{r_W} (k_i^W+1)\geq(k+2)-\sum_{i=1}^{r_W} (k_i^W+1)\geq$$
$$(k+2)-\left(\left(\sum_{i=1}^{r_W} k_i^W\right)+r_W\right) \geq (k+2)-\left(\frac{k+1}2+\frac{k+1}2\right)>0.$$

(2) $C$ is not a fibre. Hence $\alpha>0$ and $\beta>0$.
We prove the inequality $M\widetilde{C}\geq 0$ in Proposition \ref{C_nie_wlokno} at the end of the proof of the main theorem.
\end{proof}
\end{lemma}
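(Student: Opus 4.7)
To prove that $M$ is nef, I would verify $M\cdot Z\geq 0$ for every irreducible curve $Z\subset\widetilde{S}$. Any such $Z$ is either one of the exceptional divisors $E_i$, for which $M\cdot E_i = k_i+1\geq 1$, or the strict transform $\widetilde{C}=\pi^*C-\sum_i m_i E_i$ of an irreducible curve $C\subset S$ with $m_i=\mult_{x_i}C$. Using $\pi^*L\cdot E_i=0$, $\pi^*C\cdot E_i=0$ and $E_i\cdot E_j=-\delta_{ij}$, one obtains the standard formula
$$M\cdot\widetilde{C} \;=\; L\cdot C \;-\; \sum_{i=1}^r (k_i+1)\,m_i,$$
so the task reduces to bounding this right-hand side from below by $0$ for every irreducible $C\subset S$.

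I would split the analysis according to the numerical class of $C$. If $C$ is a fibre of $\Phi$ or $\Psi$ (possibly a half-fibre such as $A/2$), then $C$ is smooth elliptic, so $m_i\in\{0,1\}$ and $m_i=1$ iff $x_i\in C$. Writing $W$ for the fibre containing $C$, we get $\sum (k_i+1)m_i = r_W + \sum_i k_i^W$. The Case~I hypothesis gives $\sum_i k_i^W\leq (k+1)/2$, and since $k_i\geq 1$ for every $i$ in the reduction made before Case~I, also $r_W\leq\sum_i k_i^W\leq (k+1)/2$. A direct intersection calculation in $\Num(S)$ using $L\equiv (k+2,k+2)$ yields $L\cdot C\geq k+2$ for every type of fibre, so $M\cdot\widetilde{C}\geq (k+2)-(k+1)=1$.

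The case in which $C$ is not a fibre is the main obstacle. Here $C\equiv(\alpha,\beta)$ with $\alpha,\beta>0$, so $L\cdot C=(k+2)(\alpha+\beta)$ and $C^2=2\alpha\beta$. Since $K_S\equiv 0$ and $g(C)\geq 1$ by Observation~\ref{gen_form_hyperell}, the genus formula of Proposition~\ref{gen_form} yields the quadratic constraint
$$\sum_{i=1}^r m_i(m_i-1)\;\leq\; C^2 \;=\; 2\alpha\beta.$$
I would then have to combine this constraint with $\sum k_i=k+1$ to derive the linear inequality $\sum(k_i+1)m_i\leq (k+2)(\alpha+\beta)$. The strategy would be to bound $\max_i m_i$ via the quadratic constraint (so that the term $\sum k_im_i\leq (\max m_i)\cdot(k+1)$ is controlled) and absorb $\sum m_i$ using the linear term in $\alpha+\beta$, splitting into subcases depending on whether several $m_i$ are large or a single one dominates.

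Because the resulting calculation is the technical heart of the argument and does not depend on the particular shape of the case split (indeed it will also apply in Case~IIIa), I would factor it out into a separate proposition about non-fibre curves on $S$ and invoke it both here and later, exactly as the paper does.
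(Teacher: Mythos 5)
Your proposal is correct and follows the paper's argument essentially verbatim: the fibre case is settled by the same computation $M\cdot\widetilde{C}\geq (k+2)-\left(\sum_{i} k_i^W+r_W\right)\geq 1$ using the Case~I hypothesis, and the non-fibre case (where $\alpha>0$, $\beta>0$) is deferred to a separate proposition invoked in all the later cases, exactly as the paper defers it to Proposition~\ref{C_nie_wlokno}. The only (harmless) addition is your explicit check $M\cdot E_i=k_i+1>0$ on the exceptional curves, which the paper leaves implicit.
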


\begin{lemma}
$M$ is big.
\begin{proof}
Since $M$  is nef, it is enough to prove that  $M^2>0$. 
As $M^2=2(k+2)^2-\sum_{i=1}^r(k_i+1)^2$, we ask whether
$2(k+2)^2 >  \sum_{i=1}^r(k_i+1)^2$.
It suffices to show that 
$$2(k+2)^2  >  \left(\sum_{i=1}^rk_i^2\right)+2(k+1)+r,$$ where $r$ at the end of the formula is of the greatest possible value $k+1$.
Since $\sum_{i=1}^rk_i^2\leq \left(\sum_{i=1}^rk_i\right)^2=(k+1)^2$, our goal is to prove the inequality 
$$2(k+2)^2> (k+1)^2+3k+3,$$
which is elementary.
\end{proof}
\end{lemma}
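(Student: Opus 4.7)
The plan is to invoke the standard surface-theoretic fact that a nef divisor on a smooth projective surface is big as soon as its self-intersection is strictly positive. Since the preceding lemma has just shown that $M=\pi^*L-\sum_{i=1}^r (k_i+1)E_i$ is nef on the blow-up $\widetilde{S}$, the whole task reduces to verifying $M^2>0$.

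First I would evaluate $M^2$ using the usual intersection relations on the blow-up at distinct points, namely $(\pi^*L)^2=L^2$, $\pi^*L\cdot E_i=0$, $E_i^2=-1$, and $E_i\cdot E_j=0$ for $i\neq j$. Combined with the numerical computation $L^2=(k+2,k+2)^2=2(k+2)^2$ (which uses $A^2=B^2=0$ and $AB=\gamma$ from Serrano's table), this gives
$$M^2=2(k+2)^2-\sum_{i=1}^r(k_i+1)^2,$$
so everything reduces to the purely arithmetic inequality $2(k+2)^2>\sum_{i=1}^r(k_i+1)^2$.

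Next I would handle that inequality by expanding $\sum(k_i+1)^2=\sum k_i^2+2(k+1)+r$ and applying two crude bounds. First, since the $k_i$ are nonnegative integers with $\sum k_i=k+1$, one has $\sum k_i^2\leq\bigl(\sum k_i\bigr)^2=(k+1)^2$. Second, the normalisation $k_i\geq 1$ together with $\sum k_i=k+1$ forces $r\leq k+1$. Feeding both bounds in reduces the desired inequality to
$$2(k+2)^2>(k+1)^2+2(k+1)+(k+1),$$
which is equivalent to $k^2+3k+4>0$, an obvious elementary statement.

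I do not foresee any real obstacle; the only point that might need a second glance is whether the rather wasteful bound $\sum k_i^2\leq(k+1)^2$ leaves enough slack after the linear term $3(k+1)$ is absorbed, and the positive residue $k^2+3k+4$ confirms that it does. Note that this bigness argument is entirely numerical and makes no use of the geometric position of $x_1,\dots,x_r$ or of the hyperelliptic structure, both of which entered only through the nefness lemma.
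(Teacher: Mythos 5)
Your proposal is correct and follows essentially the same route as the paper: reduce bigness of the nef divisor $M$ to $M^2>0$, compute $M^2=2(k+2)^2-\sum_{i=1}^r(k_i+1)^2$, and bound $\sum k_i^2\leq(k+1)^2$ and $r\leq k+1$ to land on the elementary inequality. The extra residue $k^2+3k+4>0$ you compute matches the paper's final elementary check.
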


\vskip 5pt
\textbf{Case IIa.} There exists a fibre $A/2$ on which there are points $x_1$, $\ldots$, $x_s$
with the sum of multiplicities $\sum_{i=1}^s k_i>\frac{k+1}2$ and on each fibre $B$ we have  $\sum_{i=1}^{r_W} k_i^W<\frac{k+1}2$.

Obviously, for any other fibre $A/2$ and for any fibre $A$ we have  $\sum_{i=1}^{r_W} k_i^W<\frac{k+1}2$.

We write
 $$M=\pi^*L-\sum_{i=1}^r (k_i+1)=\pi^*(A/2)-\sum_{i=1}^s E_i+\pi^*((k+1,k+2))-\sum_{i=1}^s k_i E_i-\sum_{i=s+1}^r (k_i+1) E_i,$$
so $M=N+F$, where $N=\pi^*((k+1,k+2))-\sum_{i=1}^s k_i E_i-\sum_{i=s+1}^r (k_i+1) E_i$ and $F=\pi^*(A/2)-\sum_{i=1}^s E_i=\widetilde{A/2}$. Clearly $F$ is a smooth and reduced divisor, hence $F$ is an SNC divisor. It remains to show that $N$ is ample.

\begin{lemma}
$N$ is ample.
\begin{proof}
We check that $N^2>0$, and $N\widetilde{C}>0$. 

Let us estimate $N^2$
$$N^2=2(k+1)(k+2)-\sum_{i=1}^r k_i^2 -2\sum_{i=s+1}^r k_i-(r-s) \geq k^2+2k+1+s>0,$$
as  $\sum_{i=s+1}^r k_i<\frac{k+1}2$, $\sum_{i=1}^r k_i^2\leq \left(\sum_{i=1}^r k_i\right)^2=(k+1)^2$ and $r\leq k+1$.

\vskip 5pt

Now we check whether
$$(\star)=N\widetilde{C}=\left(\pi^*((k+1,k+2))-\sum_{i=1}^s k_i E_i-\sum_{i=s+1}^r (k_i+1) E_i\right).\left(\pi^*C-\sum_{i=1}^r
 m_iE_i\right) >  0 $$

We consider the following cases:

(1) $C$ is the fibre $A/2$ for which $\sum_{i=1}^s k_i>\frac{k+1}2$. Then $m_i=1$ for all $i$, and
$$(\star)=k+2-\sum_{i=1}^s k_i \geq  k+2-(k+1)=1>0,$$
as $\sum_{i=1}^s k_i\leq \sum_{i=1}^r k_i=k+1$.

(2) $C$ is a different fibre $A/2$, or $C=A$, or $C=B$. 
All $m_i=1$, hence
$$(\star)\geq k+1-\sum_{i=1}^{r_W} (k_i^W+1)  > k+1-\left(\frac{k+1}2+\frac{k+1}2\right)=0,$$
as $\sum_{i=1}^{r_W} k_i^W< \frac{k+1}2$ and $r_W < \frac{k+1}2$.

(3) $C$ is not a fibre --- see Proposition \ref{C_nie_wlokno}.
\end{proof}
\end{lemma}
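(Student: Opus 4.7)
My plan is to apply the Nakai--Moishezon criterion to $N$ on the blow-up $\widetilde{S}$. Every irreducible curve on $\widetilde{S}$ is either an exceptional divisor $E_i$ or the strict transform $\widetilde{C} = \pi^*C - \sum m_i E_i$ of an irreducible curve $C \subset S$, so it suffices to verify $N^2 > 0$, $N \cdot E_i > 0$ for each $i$, and $N \cdot \widetilde{C} > 0$ for every such $C$.

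First I would expand $N^2$ in the intersection theory of $\widetilde{S}$ and bound it from below using $\sum_{i=1}^r k_i^2 \leq (k+1)^2$, the Case~IIa hypothesis $\sum_{i=s+1}^r k_i < \frac{k+1}{2}$, and $r - s \leq k$ (which follows from $s \geq 1$ and $r \leq k+1$). For the exceptional divisors the check is immediate: $N \cdot E_i$ equals $k_i$ for $i \leq s$ and $k_i + 1$ for $i > s$, both strictly positive since each $k_i \geq 1$.

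The main work is the intersection $N \cdot \widetilde{C} = (k+1,k+2) \cdot C - \sum_{i=1}^s k_i m_i - \sum_{i=s+1}^r (k_i+1) m_i$ for irreducible $C \subset S$, which I would split into three subcases. \textbf{Subcase (1):} $C$ is the distinguished fibre $A/2$ on which $x_1, \ldots, x_s$ lie, so $m_i = 1$ for $i \leq s$ and $m_i = 0$ otherwise; the intersection reduces to $k+2 - \sum_{i=1}^s k_i$, which is at least $1$ because $\sum_{i=1}^s k_i \leq \sum_{i=1}^r k_i = k+1$. \textbf{Subcase (2):} $C$ is a different fibre, i.e.\ another $A/2$, or $A$, or $B$; the smallest value of $(k+1,k+2)\cdot C$ among these three is $k+1$ (attained by $B$), and since a $B$-fibre may well meet the special $A/2$, some $x_i$ with $i \leq s$ may lie on $C$, but then the subtracted coefficient is only $k_i$ rather than $k_i+1$, so in all cases the total subtracted is at most $\sum_{x_i \in C} k_i + r_W$; the Case~IIa hypotheses $\sum_{x_i \in C} k_i < \frac{k+1}{2}$ and $r_W < \frac{k+1}{2}$ then give strict positivity. \textbf{Subcase (3):} $C$ is not a fibre, so $C \equiv (\alpha,\beta)$ with $\alpha,\beta > 0$; I would defer this to Proposition~\ref{C_nie_wlokno}, where the same inequality is established once and for all through a uniform genus-formula argument based on Observation~\ref{gen_form_hyperell}.

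The main obstacle specific to this lemma is subcase (1): unlike Case~I, the Case~IIa hypothesis allows $\sum_{i=1}^s k_i$ to be as large as $k+1$, which forces the asymmetric decomposition $L \equiv (A/2) + (k+1,k+2)$ so that after removing the fibre $F = \widetilde{A/2}$ from $M$, the residual intersection of $N$ with the special fibre is precisely the $(k+2)$-term needed to dominate $\sum_{i=1}^s k_i$. The non-fibre inequality in subcase (3) is intrinsically harder, but is isolated into the deferred proposition and handled uniformly for all the Norimatsu cases.
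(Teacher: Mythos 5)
Your proposal is correct and follows essentially the same route as the paper: verify $N^2>0$ via the bounds $\sum k_i^2\leq(k+1)^2$, $\sum_{i=s+1}^r k_i<\frac{k+1}{2}$, $r\leq k+1$, then check $N\widetilde{C}>0$ by splitting into the distinguished fibre $A/2$, the other fibres, and non-fibres (deferred to Proposition~\ref{C_nie_wlokno}). Your additional check that $N\cdot E_i=k_i$ or $k_i+1>0$ is a point the paper leaves implicit, but it is correct and makes the Nakai--Moishezon verification complete.
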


\vskip 5pt
\textbf{Case IIb.} Points $x_1$, $\ldots$, $x_s$ lie on a fixed fibre $A/2$ with $\sum_{i=1}^s k_i>\frac{k+1}2$, moreover there exists a fibre $B$ such that $x_s$, $\ldots$, $x_{t}$ lie on $B$ with $\sum_{i=s}^{t} k_i\geq\frac{k+1}2$.

We define $F:=\widetilde{A/2}+\widetilde{B}=\widetilde{A/2+B}$. Clearly $F$ is an SNC divisor. We define $N:=M-F=\pi^*((k+1,k+1))+E_s-\sum_{i=1}^{t} k_i E_i-\sum_{i=t+1}^r (k_i+1) E_i$. It remains to check that $N$ is ample.

\begin{lemma}
$N$ is ample.
\begin{proof}
Analogously to Case IIa, we show that $N^2>0$, and that $N\widetilde{C}>0$.

We estimate $N^2$ from below, using inequalities $\sum_{i=t+1}^{r} k_i\leq\frac{k+1}2$, $\sum_{i=1}^{r} k_i^2\leq(k+1)^2$ and $r\leq k+1$.
$$N^2=2k^2+4k+1-\sum_{i=1}^r k_i^2-2\sum_{i=t+1}^r k_i-(r-t)\geq$$
$$2k^2+4k+1-(k+1)^2-2\cdot
 \frac{k+1}2-(k+1-t)=k^2+t-2.$$
Hence $N^2>0$, because $k\geq 2$.

Now we check that
$$(\star)=N\widetilde{C} >  0$$

We consider the following cases:

(1)  $C$ is the fibre $A/2$ for which $\sum_{i=1}^s k_i>\frac{k+1}2$. Then
$$(\star)=k+1+1-\sum_{i=1}^s k_i \geq k+2-(k+1)=1>0,$$
as $\sum_{i=1}^s k_i\leq k+1$.

(2) $C$ is a different fibre $A/2$, or $C=A$. Then
$$(\star)\geq k+1-\sum_{i=1}^{r_W} \left(k_i^W+1\right)  > k+1-\left(\frac{k+1}2+\frac{k+1}2\right)=0,$$
as $\sum_{i=1}^{r_W} k_i^W< \frac{k+1}2$ and $r_W < \frac{k+1}2$.

(3)  $C$ is the fibre $B$ for which $\sum_{i=s}^{t} k_i\geq\frac{k+1}2$. Then
$$(\star)=(k+1)+1-\sum_{i=s}^{t} k_i \geq k+2-(k+1)=1>0.$$

(4) $C$ is a different fibre $B$. Then
$$(\star)\geq (k+1)-\sum_{i=1}^{r_W} (k_i^W+1) > (k+1)-\left(\frac{k+1}2+\frac{k+1}2\right)=0,$$
as $\sum_{i=1}^{r_W} k_i^W<\frac{k+1}2$ and $r_W < \frac{k+1}2$.

(5) $C$ is not a fibre --- see Proposition \ref{C_nie_wlokno}.
\end{proof}
\end{lemma}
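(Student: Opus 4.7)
The plan is to apply the Nakai--Moishezon criterion: $N$ is ample on $\widetilde{S}$ if and only if $N^2>0$ and $N\cdot C'>0$ for every irreducible curve $C'\subset\widetilde{S}$. Since every such $C'$ is either an exceptional divisor $E_i$ or the strict transform $\widetilde{C}$ of an irreducible curve on $S$, the verification splits into three pieces. The overall pattern mirrors Case~IIa, but the genuinely new feature is the common point $x_s$ lying on both the distinguished $A/2$-fibre and the distinguished $B$-fibre; this overlap is the reason for the $+E_s$ correction in $N$ and will demand extra care on the exceptional-divisor side.

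For the self-intersection I will expand $N^2$ using $\pi^\ast D\cdot E_i=0$, $E_i\cdot E_j=-\delta_{ij}$, and the basis relation $(k+1,k+1)^2=2(k+1)^2$, so that $N^2$ equals $2(k+1)^2$ minus a weighted sum of squares read directly off the coefficients of $N$. Applying the uniform bounds $\sum_i k_i^2\le(k+1)^2$ (from $\sum k_i=k+1$), $\sum_{i>t}k_i\le\frac{k+1}{2}$ (the complement of the hypothesis $\sum_{i=s}^t k_i\ge\frac{k+1}{2}$), and $r\le k+1$ should reduce the estimate to an expression of roughly the shape $k^2+t-2$, strictly positive for $k\ge 2$.

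For the exceptional curves, $N\cdot E_j$ equals minus the coefficient of $E_j$ in $N$, giving $k_j\ge 1$ for $j\ne s$ with $j\le t$, $k_j+1\ge 2$ for $j>t$, and $k_s-1$ for $j=s$. The last value is the only potentially delicate one; I expect to justify $k_s\ge 2$ from the Case~IIb hypotheses, or to handle the boundary value $k_s=1$ as a small separate case. For the strict transforms, writing $\widetilde{C}=\pi^\ast C-\sum m_i E_i$ for an irreducible $C\equiv(\alpha,\beta)$ with multiplicities $m_i$, and using $A^2=B^2=0$, $AB=\gamma=2$, one obtains
\[
N\cdot\widetilde{C}=(k+1)(\alpha+\beta)+m_s-\sum_{i=1}^r k_i m_i-\sum_{i>t}m_i.
\]
I would split the verification according to the geometric type of $C$: when $C$ is the distinguished fibre $A/2$, direct substitution with $\sum_{i=1}^s k_i\le k+1$ suffices; when $C$ is the distinguished $B$, the analogous argument uses $\sum_{i=s}^t k_i\le k+1$; when $C$ is any other fibre of $\Phi$ or $\Psi$, the Case~IIb hypothesis forces $\sum_i k_i^W<\frac{k+1}{2}$ and $r_W<\frac{k+1}{2}$, and a two-halves estimate keeps the right-hand side positive.

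The last and hardest subcase is $C$ not a fibre, so $\alpha,\beta\ge 1$; this is the main obstacle. I plan to defer it to the separate non-fibre Proposition~\ref{C_nie_wlokno} already invoked in Cases~I and~IIa, whose proof presumably combines the genus formula of Proposition~\ref{gen_form} with Observation~\ref{gen_form_hyperell} (every curve on $S$ has geometric genus $\ge 1$) to bound $\sum m_i(m_i-1)$ against $C^2=2\alpha\beta$, thereby producing the required positivity of $N\cdot\widetilde{C}$.
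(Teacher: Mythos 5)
Your plan coincides with the paper's proof at every point where the paper actually carries out a computation: the expansion and estimate of $N^2$ (arriving at $k^2+t-2>0$ for $k\ge 2$), the case-by-case check of $N\widetilde{C}$ on the two distinguished fibres (using $\sum k_i\le k+1$) and on all other fibres (using the two-halves bound $\sum k_i^W<\frac{k+1}{2}$, $r_W<\frac{k+1}{2}$), and the deferral of non-fibre curves to Proposition \ref{C_nie_wlokno}. Your intersection formula $N\cdot\widetilde{C}=(k+1)(\alpha+\beta)+m_s-\sum_{i=1}^{r}k_im_i-\sum_{i>t}m_i$ agrees with the expression the paper records for Case IIb in that proposition.

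The one place where you go beyond the paper is also where your plan has a genuine gap: the Nakai--Moishezon test against the exceptional curves. You correctly compute $N\cdot E_s=k_s-1$ and flag it as the delicate value, but neither of your proposed fixes can work. The Case IIb hypotheses do not force $k_s\ge 2$: take for instance $k=3$, $r=3$, $s=2$, $t=3$, $k_1=2$, $k_2=1$, $k_3=1$, which satisfies $\sum_{i=1}^{s}k_i=3>\frac{k+1}{2}$ and $\sum_{i=s}^{t}k_i=2\ge\frac{k+1}{2}$ while $k_s=1$. In that situation $N\cdot E_s=0$, so $N$ is nef but \emph{not} ample, and no ``small separate case'' argument can establish the lemma as stated; one would instead have to modify the decomposition $M=N+F$ or invoke a vanishing statement valid for $N$ merely nef and big. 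Note that the paper itself never intersects $N$ with the exceptional divisors --- its curve test ranges only over strict transforms of curves on $S$ --- so the difficulty you have isolated is a real, unacknowledged weak point of the published argument rather than an artifact of your approach; aside from it, your proposal reproduces the paper's proof.
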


\vskip 5pt
\textbf{Case IIIa.} Points $x_1$, $\ldots$, $x_s$ lie on a fixed fibre  $A$ with $\sum_{i=1}^s k_i>\frac{k+1}2$, and for each fibre  $B$ we have $\sum_{i=1}^{r_W} k_i^W<\frac{k+1}2$.

Obviously in this case, for any other fibre $A$ and for any fibre  $A/2$ we have $\sum_{i=1}^{r_W} k_i^W<\frac{k+1}2$.

Let $M=\pi^*L-\sum_{i=1}^r (k_i+1) E_i$. We have already showed in Case I that $M^2>0$. It remains to prove that  $M$ jest nef.

\begin{lemma}
$M$ is nef.
\begin{proof}
We ask whether 
$$(\star)=M\widetilde{C} \geq   0$$

We consider the following cases:

(1) $C$ is the fibre $A$ for which $\sum_{i=1}^s
 k_i>\frac{k+1}2$. Then
$$(\star)=2(k+2)-\sum_{i=1}^{s}(k_i+1)\geq 2(k+2)-\sum_{i=1}^{r} k_i - s\geq$$  $$2(k+2)-(k+1)
 - (k+1)=2>0.$$

(2) $C$ is a different fibre $A$, or $C=A/2$, or  $C=B$. Then
$$(\star)\geq k+2-\sum_{i=1}^{r_W} (k_i^W+1)\geq k+2-\left(\frac{k+1}2+\frac{k+1}2\right)=1> 0.$$

(3) $C$ is not a fibre --- see Proposition \ref{C_nie_wlokno}.
\end{proof}
\end{lemma}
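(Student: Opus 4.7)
The plan is to verify nefness of $M = \pi^*L - \sum_{i=1}^r (k_i+1)E_i$ by checking $M \cdot \widetilde{C} \geq 0$ for every reduced irreducible curve $C \subset S$ with proper transform $\widetilde{C} = \pi^*C - \sum_i m_i E_i$. Using $\pi^*D \cdot E_i = 0$ and $E_i \cdot E_j = -\delta_{ij}$, the intersection collapses to
\[
M \cdot \widetilde{C} \;=\; L \cdot C \;-\; \sum_{i=1}^r (k_i+1)\, m_i,
\]
so the whole lemma reduces to a case analysis on the geometric type of $C$, steered by the combinatorics already locked in by the hypotheses of Case IIIa.

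I would split into three kinds of $C$. First, the distinguished fibre $A$ of $\Phi$ carrying the heavy cluster with $\sum_{i=1}^s k_i > \tfrac{k+1}{2}$. Here $m_i \in \{0,1\}$ and, reading $L \equiv (k+2,k+2)$ against the Serrano basis with $A \cdot B = \gamma$, one gets $L \cdot A = 2(k+2)$; since $\sum_{i=1}^s(k_i+1) = \sum_{i=1}^s k_i + s \leq (k+1)+(k+1) = 2k+2$, a surplus of $2$ remains. Second, any other fibre $W$ of $\Phi$ or $\Psi$: the Case IIIa hypothesis on $B$-fibres, combined with $\sum_i k_i = k+1$ (which forces at most $\tfrac{k+1}{2}$ of the mass to sit on a non-distinguished $A$ or $A/2$), gives both $\sum_{i=1}^{r_W} k_i^W \leq \tfrac{k+1}{2}$ and $r_W \leq \tfrac{k+1}{2}$, and $L \cdot W \geq k+2$; the inequality then holds with room to spare.

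The main obstacle is the third kind: curves $C$ which are not fibres. Their numerical class $(\alpha,\beta)$ has $\alpha,\beta > 0$, so $L \cdot C = (k+2)(\alpha+\beta)$, but $\sum (k_i+1) m_i$ can be large when the $m_i$ are. Rather than handle this inside the present lemma, the plan is to defer it to a single auxiliary statement (Proposition~\ref{C_nie_wlokno}) shared with Cases I, IIa, IIb. That proposition would rest on Observation~\ref{gen_form_hyperell} (any irreducible curve on $S$ has genus $\geq 1$) and the genus formula of Proposition~\ref{gen_form} with $K_S \equiv 0$, furnishing the bound $\sum m_i(m_i-1) \leq C^2 = 2\alpha\beta$. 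Combining this with Cauchy--Schwarz-type estimates on $\sum (k_i+1)m_i$ and the equality $\sum k_i = k+1$ should close the inequality; the delicate point will be to keep the estimate safe when several points concentrate with high multiplicity on a non-fibre curve of small bidegree.
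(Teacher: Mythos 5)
Your proposal follows essentially the same route as the paper: the same reduction of $M\cdot\widetilde{C}$ to $L\cdot C-\sum(k_i+1)m_i$, the same three-way case split (the distinguished fibre $A$ with $L\cdot A=2(k+2)$ and surplus $2$, the other fibres handled via the Case IIIa bounds $\sum k_i^W\leq\frac{k+1}{2}$ and $r_W\leq\frac{k+1}{2}$, and non-fibre curves deferred to Proposition~\ref{C_nie_wlokno}), with the same numerical estimates. The argument is correct as stated.
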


\vskip 5pt
\textbf{Case IIIb.} Points $x_1$, $\ldots$, $x_s$ lie on a fixed fibre  $A$ with $\sum_{i=1}^s k_i>\frac{k+1}2$, and there exists a fibre   $B$, such that  $x_s$, $\ldots$, $x_{t}$ lie on $B$ and $\sum_{i=s}^{t} k_i\geq\frac{k+1}2$.

We define $F:=\widetilde{B}=\pi^*B-\sum_{i=s}^{t}E_i$. Clearly $F$ is an SNC divisor. 
Let $N:=M-F=\pi^*((k+2,k+1))-\sum_{i=1}^{s-1} (k_i+1)E_i-\sum_{i=s}^{t} k_i E_i
-\sum_{i=t+1}^{r} (k_i+1) E_i$. It remains to check that  $N$ is ample.

\begin{lemma}
$N$ is ample.
\begin{proof}
Let us estimate $N^2$:
$$N^2=2(k+2)(k+1)-\sum_{i=1}^{s-1} (k_i+1)^2-\sum_{i=s}^{t} k_i^2-\sum_{i=t+1}^{r} (k_i+1)^2=$$
$$2(k+2)(k+1)-\sum_{i=1}^{r} k_i^2-2\left(\sum_{i=1}^{s-1} k_i-\sum_{i=t+1}^{r} k_i\right)-(s-1)-(r-t)\geq$$
$$2k^2+6k+4-(k+1)^2-2(k+1)-s+1-r+t\geq$$
$$k^2+2k+2-(k+1)-(k+1)+t=k^2+t>0.$$

Now we check that
$$(\star)=N\widetilde{C} >  0$$

Let us consider the following cases:

(1) $C$ is the fibre  $A$ for which $\sum_{i=1}^s k_i>\frac{k+1}2$. Then
$$(\star)=2(k+1)-\sum_{i=1}^{s-1} (k_i+1)-k_s \geq  2(k+1)-\sum_{i=1}^{s} k_i-(s-1)\geq$$
$$2(k+1)-(k+1)-(k+1)+1=1>0.$$

(2)  $C$ is the fibre  $B$ for which $\sum_{i=s}^{t} k_i\geq\frac{k+1}2$. Then
$$(\star)=k+2-\sum_{i=s}^{t} k_i \geq k+2-(k+1)=1>0.$$

(3) $C$ is a different fibre $A$ or a different fibre $B$, or $C=A/2$. Then
$$(\star)\geq k+1-\sum_{i=1}^{r_W} (k_i^W+1)  > k+1-\left(\frac{k+1}2+\frac{k+1}2\right)=0.$$

(4) $C$ is not a fibre --- see Proposition \ref{C_nie_wlokno}.
\end{proof}
\end{lemma}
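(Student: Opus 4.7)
The plan is to verify ampleness of $N$ via the Nakai--Moishezon criterion on the blow-up $\widetilde{S}$: it suffices to check $N^{2}>0$ together with $N\cdot\Gamma>0$ for every irreducible curve $\Gamma\subset\widetilde{S}$. Such a $\Gamma$ is either one of the exceptional divisors $E_{j}$ or the strict transform $\widetilde{C}=\pi^{*}C-\sum_{i}m_{i}E_{i}$ of an irreducible curve $C\subset S$.

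First I would handle the self-intersection. Expanding via $(\pi^{*}D)^{2}=D^{2}$, $\pi^{*}D\cdot E_{i}=0$, $E_{i}\cdot E_{j}=-\delta_{ij}$, and the numerical intersection formula $(a,b)\cdot(a',b')=ab'+a'b$ on $\Num(S)$ (which yields $\pi^{*}((k+2,k+1))^{2}=2(k+2)(k+1)$), and then regrouping the squares $(k_{i}+1)^{2}=k_{i}^{2}+2k_{i}+1$, one obtains
\[
N^{2}=2(k+2)(k+1)-\sum_{i=1}^{r}k_{i}^{2}-2\Bigl(\sum_{i=1}^{s-1}k_{i}+\sum_{i=t+1}^{r}k_{i}\Bigr)-(s-1)-(r-t).
\]
The crude bounds $\sum k_{i}^{2}\leq(k+1)^{2}$, $r\leq k+1$, $s\leq k+1$, combined with the case-specific inequality $\sum_{i=1}^{s-1}k_{i}+\sum_{i=t+1}^{r}k_{i}\leq(k+1)/2$ (an immediate consequence of $\sum_{i=s}^{t}k_{i}\geq(k+1)/2$), then reduce $N^{2}$ to a lower bound of shape $k^{2}+t$, which is positive since $t\geq s\geq 1$.

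For each exceptional divisor $E_{j}$, the intersection $N\cdot E_{j}$ equals the coefficient of $-E_{j}$ in $N$, namely $k_{j}+1$ when $j\in[1,s-1]\cup[t+1,r]$ and $k_{j}$ when $j\in[s,t]$; in both cases this is positive because $k_{j}\geq 1$. For strict transforms $\widetilde{C}$ I would record once the base values $\pi^{*}((k+2,k+1))\cdot\pi^{*}A=2(k+1)$, $\pi^{*}((k+2,k+1))\cdot\pi^{*}B=k+2$, $\pi^{*}((k+2,k+1))\cdot\pi^{*}(A/2)=k+1$, and then split into four subcases: (1)~$C$ is the distinguished fibre $A$, where $m_{i}=1$ for $i\leq s$ and the inequality collapses to $2(k+1)-\sum_{i=1}^{s}k_{i}-(s-1)\geq 1$ via $\sum_{i=1}^{s}k_{i}\leq k+1$ and $s\leq k+1$; (2)~$C$ is the distinguished fibre $B$, reducing to $k+2-\sum_{i=s}^{t}k_{i}\geq 1$; (3)~$C$ is any other fibre $W$, for which the case hypotheses force both $\sum_{i=1}^{r_{W}}k_{i}^{W}<(k+1)/2$ and $r_{W}<(k+1)/2$ (the latter because each $k_{i}\geq 1$), so the loose estimate $N\cdot\widetilde{C}\geq(k+1)-\sum_{i=1}^{r_{W}}(k_{i}^{W}+1)>0$ suffices; (4)~$C$ is not a fibre, handled by Proposition~\ref{C_nie_wlokno}.

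The main obstacle is the bookkeeping in subcase~(3): one must confirm that the hypotheses of Case~IIIb exclude any third fibre from carrying jet weight as large as $(k+1)/2$. Since the distinguished $A$-fibre already absorbs more than $(k+1)/2$ of the total weight $k+1$ and the distinguished $B$-fibre absorbs at least $(k+1)/2$, and since any two distinct fibres of either $\Phi$ or $\Psi$ are disjoint, every remaining fibre is forced strictly below the threshold; the inequality on $r_{W}$ then follows from $k_{i}\geq 1$. Once this is in place, all numerical verifications in subcases~(1)--(3) are elementary, and the only genuinely nontrivial ingredient, namely the non-fibre curve case, has been factored out into Proposition~\ref{C_nie_wlokno}, which is to be established uniformly for all cases of the main theorem.
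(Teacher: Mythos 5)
Your proposal is correct and follows essentially the same route as the paper: the identical expansion and crude bounding of $N^{2}$ down to a positive quantity of the form $k^{2}+t$, the same four subcases for $N\cdot\widetilde{C}$ with the same numerical estimates, and the same delegation of the non-fibre case to Proposition~\ref{C_nie_wlokno}. The only (welcome) additions are the explicit positivity check $N\cdot E_{j}>0$ on exceptional divisors and the remark that for other $\Psi$-fibres the relevant constant is actually $k+2$, which quietly covers the borderline situation where such a fibre carries total weight exactly $\frac{k+1}{2}$.
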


\vskip 5pt
\textbf{Case IV.} Points $x_1$, $\ldots$, $x_s$ lie on a fixed fibre  $B$ with $\sum_{i=1}^s k_i>\frac{k+1}2$, and for each fibre 
$A/2$, and for each fibre  $A$ the sum of multiplicities of the points lying on this fibre does not exceed $\frac{k+1}2$.

We define $F:=\widetilde{B}=\pi^*B-\sum_{i=1}^{s}E_i$. Of course $F$ is an SNC divisor. We define $N:=M-F=\pi^*((k+2,k+1))-\sum_{i=1}^{s} k_i E_i
-\sum_{i=s+1}^{r} (k_i+1) E_i$. It remains to prove that  $N$ is ample.

\begin{lemma}
$N$ is ample.
\begin{proof}
Analogously to Case  IIa,
$$N^2= k^2+2k+1+s>0.$$

\vskip 5pt

We have to check that
$$(\star)=N\widetilde{C} >  0 $$

Let us consider the following cases:

(1) $C$ is the fibre $B$ for which $\sum_{i=1}^s k_i>\frac{k+1}2$. Then
$$(\star)=k+2-\sum_{i=1}^s k_i \geq k+2-(k+1)=1>0.$$

(2) $C$ is a different fibre $B$ or  $C=A$. Then
$$(\star)\geq k+2-\sum_{i=1}^{r_W} (k_i^W+1) \geq
 k+2-\left(\frac{k+1}2+\frac{k+1}2\right)=1>0.$$

(3) $C=A/2$.

If $r_W<\frac{k+1}2$, then
$$(\star)=k+1-\sum_{i=1}^{r_W} (k_i^W+1)>k+1-\left(\frac{k+1}2+\frac{k+1}2\right)=0.$$

Otherwise  $r_W=\frac{k+1}2$. Hence the situation is as follows: 
 $s$ points whose sum of multiplicities is greater than  $\frac{k+1}2$ lie on the fixed fibre  $B$, and there exists a fibre 
 $A/2$ with $\frac{k+1}2$ points whose sum of  multiplicities does not exceed $\frac{k+1}2$, which implies that all the multiplicities of points lying on $A/2$ equal  $1$. Hence  $x_1$, $\ldots$, $x_s$ lie on $B$ and $x_s$,~$\ldots$, $x_r$ lie on  $A/2$. Therefore
$$(\star)=k+1-k_s-\sum_{i=s+1}^r (k_i+1)= k+1-1-2(r-s)=k+2-2\cdot\frac{k+1}2=1>0$$

(4) $C$ is not a fibre --- see Proposition \ref{C_nie_wlokno}.
\end{proof}
\end{lemma}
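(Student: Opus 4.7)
The plan is to apply the Nakai--Moishezon criterion to $N$ on $\widetilde{S}$: I need $N^2 > 0$ together with $N \cdot D > 0$ for every irreducible curve $D \subset \widetilde{S}$. The exceptional divisors satisfy $N \cdot E_i = k_i$ for $i \leq s$ and $N \cdot E_i = k_i+1$ for $i > s$, both strictly positive, so the only curves that require real work are proper transforms $\widetilde{C}$ of irreducible curves $C \subset S$.

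For the self-intersection I would expand
$$N^2 = 2(k+2)(k+1) - \sum_{i=1}^{s} k_i^2 - \sum_{i=s+1}^{r}(k_i+1)^2$$
and bound from below using $\sum_i k_i^2 \leq (k+1)^2$, $r \leq k+1$, and the Case~IV hypothesis $\sum_{i=s+1}^{r} k_i \leq \frac{k+1}{2}$. The arithmetic should reproduce the Case~IIa bound $N^2 \geq k^2 + 2k + 1 + s > 0$, and indeed the author invokes this parallel directly.

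For the intersection with proper transforms of fibres I would split into subcases matching the other Case~IV subcases. When $C$ is the distinguished fibre $B$ containing $x_1,\ldots,x_s$, every $m_i = 1$ and $N \cdot \widetilde{C} = k+2 - \sum_{i=1}^{s} k_i \geq k+2-(k+1) = 1$. When $C$ is any other fibre $B$ or when $C = A$, the global Case~IV hypothesis $\sum_i k_i^W \leq \frac{k+1}{2}$ and $r_W \leq \frac{k+1}{2}$ yield $N \cdot \widetilde{C} \geq k+2 - (k+1) = 1$.

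The main obstacle is the subcase $C = A/2$. Because only the $B$--summand of $N$ meets $A/2$ nontrivially, the coefficient entering the estimate drops from $k+2$ to $k+1$, and positivity becomes tight. As long as $r_W < \frac{k+1}{2}$ there is enough slack to conclude $N \cdot \widetilde{C} > 0$ from the same fibre inequalities. The delicate situation is the extremal $r_W = \frac{k+1}{2}$: combined with $\sum_i k_i^W \leq \frac{k+1}{2}$ this forces every multiplicity on $A/2$ to equal $1$, so necessarily $x_1,\ldots,x_s$ lie on $B$ and $x_s,\ldots,x_r$ lie on $A/2$, meeting in the single point $x_s$ with $k_s = 1$. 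Substituting gives
$$N \cdot \widetilde{C} = k+1 - 1 - 2(r-s) = k+2 - 2 \cdot \frac{k+1}{2} = 1 > 0.$$
Non-fibre curves $C$ are handled uniformly by invoking Proposition~\ref{C_nie_wlokno}, completing the Nakai--Moishezon verification.
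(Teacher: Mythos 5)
Your proposal is correct and follows essentially the same route as the paper: the same lower bound $N^2\geq k^2+2k+1+s$ carried over from Case IIa, the same fibre-by-fibre check with the tight subcase $C=A/2$, $r_W=\frac{k+1}{2}$ resolved by forcing all $k_i=1$ on that fibre, and the same deferral of non-fibre curves to Proposition~\ref{C_nie_wlokno}. The only (harmless) addition is your explicit check that $N\cdot E_i>0$ for the exceptional curves, which the paper leaves implicit.
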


We have considered all the possible positions of  $x_1$, $\ldots$, $x_r$.
Hence the proof will be completed if we show that for a curve  $C$ not being a fibre the inequality respectively $M\widetilde{C}\geq 0$ or $N\widetilde{C}>0$ holds. We prove this fact in the proposition below. 

\begin{proposition}\label{C_nie_wlokno}
We have the following inequalities for a curve $C$ which is not a fibre:
\begin{itemize}
\item $M\widetilde{C}\geq 0 \text{ in Cases I and IIIa,}$
\item $N\widetilde{C}> 0 \text{ in Cases IIa, IIb, IIIb and IV.}$
\end{itemize}

\begin{proof}
By  assumption $C\equiv(\alpha,\beta)$ with $\alpha>0$ and $\beta>0$.
We have to prove that
\begin{itemize}
\item $\displaystyle\bigg(\bigg(\sum_{i=1}^rk_i\bigg)+1\bigg)(\alpha+\beta) - \sum_{i=1}^r(k_i+1)m_i \geq 0$ \  in Cases I and IIIa;
\item $\displaystyle\bigg(\sum_{i=1}^rk_i\bigg)(\alpha+\beta)+\alpha - \sum_{i=1}^sk_im_i-\sum_{i=s+1}^r(k_i+1)m_i > 0$ \  in Case IIa;
\item $\displaystyle\bigg(\sum_{i=1}^rk_i\bigg)(\alpha+\beta)+m_s - \sum_{i=1}^{t}k_im_i -\sum_{i=t+1}^r(k_i+1)m_i> 0$ \ in Case IIb;
\item $\displaystyle\bigg(\sum_{i=1}^rk_i\bigg)(\alpha+\beta)+\beta - \sum_{i=1}^{s-1}(k_i+1)m_i- \sum_{i=s}^{t}k_im_i- \sum_{i=t+1}^r(k_i+1)m_i > 0$ \ in Case IIIb;
\item $\displaystyle\bigg(\sum_{i=1}^rk_i\bigg)(\alpha+\beta)+\beta - \sum_{i=1}^sk_im_i-\sum_{i=s+1}^r(k_i+1)m_i > 0$ \
in Case IV.
\end{itemize}

\vskip 5pt
Observe that in all the situations above, Proposition \ref{C_nie_wlokno} will be proved if we show that
$$(\star) \qquad \bigg(\sum_{i=1}^r k_i\bigg)(\alpha+\beta)\geq\sum_{i=1}^r (k_i+1)m_i.$$
\vskip 5pt

Let $D\equiv (4,4)$. Since $h^0(D)=4\cdot 4=16$, for an arbitrary point $x$ there exists a divisor $D_x\in |D|$ such that $\mult_xD_x=5$ (vanishing up to order 5 imposes 15 conditions). Hence there are two possibilities:
either $\alpha\leq 4$ and $\beta\leq 4$, and then  $C$ and $D_x$ may have a common component $C$ (the curve $C$ is irreducible); or 
$\alpha> 4$ or $\beta> 4$, and then by B{\'e}zout's Theorem $$4(\alpha+\beta)=(\alpha,\beta).(4,4)=CD=CD_x\geq \mult_xC\cdot \mult_xD_x\geq 5m_i.$$
Proof of the proposition in each case will be completed in Lemma \ref{alfa>4} and Lemma \ref{alfa<=4}.
\end{proof}
\end{proposition}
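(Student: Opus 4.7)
The plan is to complete the proof by establishing the single inequality
\[
(\star)\qquad \left(\sum_{i=1}^r k_i\right)(\alpha+\beta)\;\geq\;\sum_{i=1}^r (k_i+1)m_i,
\]
since, under the respective case hypotheses, each of the five bullets differs from $(\star)$ by a strictly positive slack term (one of $\alpha+\beta$, $\alpha$, $\beta$, or $m_s$). The argument then splits along the B\'ezout dichotomy set up at the end of the excerpt: either $\alpha\leq 4$ and $\beta\leq 4$, or $\max(\alpha,\beta)>4$.

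In the regime $\max(\alpha,\beta)>4$, say $\alpha>4$, for each $x_i$ I would pick a divisor $D_{x_i}\in|(4,4)|$ with $\mult_{x_i}D_{x_i}\geq 5$, which exists by Proposition~\ref{Ser2} because $h^0((4,4))=16$ strictly exceeds the $\binom{6}{2}=15$ linear conditions imposed by a fivefold vanishing. Irreducibility of $C$ together with $\alpha>4$ rules out $C$ being a component of $D_{x_i}$, since every irreducible component of $D_{x_i}$ has type $(\alpha',\beta')$ with $\alpha'\leq 4$. B\'ezout then yields the uniform pointwise bound $m_i\leq 4(\alpha+\beta)/5$, and summing gives $\sum(k_i+1)m_i\leq \tfrac{4}{5}(\alpha+\beta)((k+1)+r)$, which is bounded by $(k+1)(\alpha+\beta)$ as soon as $4r\leq k+1$. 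In the remaining range where $r$ is larger, I would sharpen this by combining the pointwise B\'ezout estimate with the genus formula $\sum\binom{m_i}{2}\leq\alpha\beta$ (Proposition~\ref{gen_form} applied with $K_S\equiv 0$, together with Observation~\ref{gen_form_hyperell}) and with the structural information on the weights $k_i$ coming from the particular Case (I--IV) under consideration.

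In the regime $\alpha\leq 4$, $\beta\leq 4$, the same genus formula forces $\sum m_i(m_i-1)\leq 2\alpha\beta\leq 32$, so both the individual multiplicities and the number of points $x_i$ with $m_i\geq 2$ are a priori bounded. I would run through the finitely many possibilities for the pair $(\alpha,\beta)$ and the multiplicity profile $(m_1,\dots,m_r)$, verifying $(\star)$ directly in each, with help from the case-dependent constraint $\sum_{x_j\in W}k_j^W\leq (k+1)/2$ from Case~I (or its analogues in Cases~IIa--IV, which pin down the distinguished fibre on which the large $k_j$-weight concentrates).

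The main obstacle I anticipate is the intermediate B\'ezout range, where $\max(\alpha,\beta)$ only slightly exceeds $4$ while $r$ is close to $k+1$: the crude B\'ezout bound is no longer tight enough by itself, and striking the right balance between B\'ezout, the genus constraint, and the case-specific structure of the $k_i$ is the delicate point that the two supporting lemmas will have to resolve.
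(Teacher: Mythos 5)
Your overall scaffolding matches the paper's: reduce the five case-specific inequalities to $(\star)$, then split along the B\'ezout dichotomy for $D\equiv(4,4)$ and treat the two regimes with B\'ezout plus the genus formula. However, there is a concrete step in your plan that fails. In the regime $\alpha\leq 4$, $\beta\leq 4$ you propose to ``verify $(\star)$ directly'' for each admissible multiplicity profile, but $(\star)$ is simply false for some of them: for $C\equiv(1,1)$ the genus formula permits $m_1=2$, and with, say, $r=3$, $k_1=k_2=k_3=1$, $m_2=m_3=1$ one gets $\bigl(\sum k_i\bigr)(\alpha+\beta)=6$ while $\sum(k_i+1)m_i=8$. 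The paper's Lemma~\ref{alfa<=4} (cases (i), (v), (vi), (viii), (ix), and especially (x)) handles exactly these profiles by \emph{abandoning} $(\star)$ and retreating to the original five inequalities, exploiting their slack terms ($\alpha+\beta$, $\min\{\alpha,\beta\}$, $m_s$) together with additional points that the hypotheses of Cases IIa--IV force to exist on the distinguished fibres (the ``restoring'' of deleted inequalities). Your framework, which treats $(\star)$ as the sole target, cannot close these cases.

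In the regime $\max(\alpha,\beta)>4$ your crude bound $\sum(k_i+1)m_i\leq\tfrac45(\alpha+\beta)(k+1+r)$ covers only $4r\leq k+1$, and you correctly flag that the remaining range (with $r$ up to $k+1$) is the hard part --- but you leave it unresolved, whereas it is precisely where the paper's two key ideas live. First, Observation~\ref{redukcja_k_i=1}: since B\'ezout gives $\alpha+\beta\geq\tfrac54 m_i$, one may subtract the inequalities $(k_i-1)(\alpha+\beta)\geq(k_i-1)m_i$ and reduce $(\star)$ to the $k_i$-free statement $r(\alpha+\beta)\geq 2\sum m_i$, which for $r\geq 3$ (after discarding points with $m_i\leq 3$) follows from the power-mean inequality and the genus bound $2\alpha\beta\geq\sum m_i^2-\sum m_i$. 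Second, for the residual configurations with one or two high-multiplicity points, the paper uses \emph{multi-point} B\'ezout: divisors in $|(4,4)|$ with prescribed multiplicities $(4,2)$ at two points or $(3,3,2)$ at three points, giving $4(\alpha+\beta)\geq 4m_1+2m_2$ and $4(\alpha+\beta)\geq 3m_1+3m_2+2m_3$. Neither of these ingredients appears in your sketch, so the ``delicate point'' you defer to the supporting lemmas is in fact the bulk of the proof and remains open in your proposal.
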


\begin{lemma}\label{alfa>4}
Proposition \ref{C_nie_wlokno} holds if $\alpha> 4$ or $\beta> 4$.
\end{lemma}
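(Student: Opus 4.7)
The plan is to derive the multiplicity bound $m_i \leq \tfrac{4(\alpha+\beta)}{5}$ from B\'ezout's theorem, and then substitute it into the master inequality $(\star)$ of Proposition~\ref{C_nie_wlokno}.

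Since $\alpha > 4$ or $\beta > 4$, the class $D_{x_i} - C \equiv (4-\alpha, 4-\beta)$ has a strictly negative coefficient and therefore cannot be effective on~$S$. Hence the irreducible curve $C$ is not a component of the divisor $D_{x_i} \in |(4,4)|$ with $\mult_{x_i} D_{x_i} \geq 5$ constructed just before the lemma. B\'ezout then yields
\[
4(\alpha+\beta) \;=\; C \cdot D_{x_i} \;\geq\; 5\,m_i,
\]
so $m_i \leq \tfrac{4(\alpha+\beta)}{5}$ for every $i$. Since the five case-inequalities of Proposition~\ref{C_nie_wlokno} are all implied by the master inequality $(\star)$ (the additive slack $\alpha$, $\beta$, or $m_s$ in the various cases being positive), it suffices to check
\[
(k+1)(\alpha+\beta) \;\geq\; \sum_{i=1}^r (k_i+1)\, m_i.
\]
Plugging in the per-point bound and using $\sum k_i = k+1$, this becomes the arithmetic inequality $4(k+1+r) \leq 5(k+1)$, i.e.\ $4r \leq k+1$, which closes the argument in the regime $r \leq (k+1)/4$.

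The main obstacle is the complementary regime $r > (k+1)/4$, where the uniform B\'ezout bound is too weak to close $(\star)$. To handle this I would split the sum according to the size of $k_i$. For indices with $k_i \geq 4$ the elementary inequality $\tfrac{4}{5} \leq \tfrac{k_i}{k_i+1}$ gives $(k_i+1) m_i \leq k_i(\alpha+\beta)$ termwise, so those points contribute no excess to $(\star)$. For the remaining indices $k_i \in \{1,2,3\}$ I would sharpen B\'ezout by using a tailored auxiliary divisor of type $(k_i+1, k_i+1)$ with vanishing order $k_i+2$ at $x_i$ whenever $(k_i+2)(k_i+3) \leq 2(k_i+1)^2$ permits, and bring in the genus bound $\sum m_i(m_i-1) \leq 2\alpha\beta$ from Proposition~\ref{gen_form} (using $K_S \equiv 0$ and $g(C) \geq 1$ of Observation~\ref{gen_form_hyperell}) to control the collective contribution of the small-$k_i$ points via a Cauchy--Schwarz or convexity estimate. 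The extremal configuration $k_1 = \cdots = k_{k+1} = 1$ is where all of these ingredients must combine simultaneously.
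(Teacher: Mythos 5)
Your opening moves match the paper: the non-effectivity of $D_{x_i}-C$ when $\alpha>4$ or $\beta>4$, the single-point B\'ezout bound $4(\alpha+\beta)\geq 5m_i$, and the reduction of all five case-inequalities to $(\star)$ are all correct, and your remark that points with $k_i\geq 4$ satisfy $(k_i+1)m_i\leq k_i(\alpha+\beta)$ termwise is essentially the same peeling device the paper uses. But the argument stops exactly where the lemma becomes hard. First, the proposed sharpening for $k_i\in\{1,2,3\}$ does not exist: by your own dimension count $(k_i+2)(k_i+3)\leq 2(k_i+1)^2$ fails for $k_i=1,2$, and for $k_i=3$ the ``tailored'' divisor is just $(4,4)$ with multiplicity $5$ again, yielding $m_i\leq\frac{4}{5}(\alpha+\beta)$ where the termwise bound would require $m_i\leq\frac{3}{4}(\alpha+\beta)$. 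So in the extremal configuration $k_1=\cdots=k_{k+1}=1$ you have no actual argument, only the intention to combine the genus bound with ``Cauchy--Schwarz or convexity''. Second, that intention cannot succeed in the form you set up, because the bare inequality $(\star)$ is too strong to establish when only two points of large multiplicity are involved: for $r=2$, $k_1=k_2=1$, the genus formula gives only $\alpha+\beta\geq\sqrt{(m_1+m_2)^2-2(m_1+m_2)}$ (roughly $m_1+m_2-1$) and the two-point B\'ezout estimates give only $\alpha+\beta\geq\frac{3}{4}(m_1+m_2)$, neither of which reaches $\alpha+\beta\geq m_1+m_2$.

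The paper closes this gap differently. It first reduces $(\star)$ to the $k$-independent inequality $r(\alpha+\beta)\geq 2\sum_{i=1}^r m_i$ by adding the termwise inequalities $(k_i-1)(\alpha+\beta)\geq(k_i-1)m_i$; it then discards every point with $m_i\leq 3$ (for which $\alpha+\beta\geq 6\geq 2m_i$ holds automatically), and for the remaining $r\geq 3$ points of multiplicity at least $4$ it proves $r(\alpha+\beta)\geq 2\sum m_i$ from the genus formula $2\alpha\beta\geq\sum m_i^2-\sum m_i$ together with the power-mean inequality. When only $r\leq 2$ points survive this reduction, it abandons $(\star)$ altogether and proves the original case-specific inequalities directly, exploiting their additive slack (the extra $+(\alpha+\beta)$ in Cases I and IIIa, and the replacement of $(k_i+1)m_i$ by $k_im_i$ for points on the fixed fibre in Cases IIa, IIb, IIIb, IV) together with multi-point B\'ezout divisors in $|(4,4)|$ with prescribed multiplicities $(4,2)$ and $(3,3,2)$, restoring previously discarded points where necessary. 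These last two ingredients --- the discard-and-restore bookkeeping and the multi-point interpolation divisors combined with the case-specific slack --- are what your proposal is missing, and without them the regime $r>(k+1)/4$ does not close.
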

\begin{proof}
We begin with a useful observation:
\begin{observation}\label{redukcja_k_i=1}
Let $S$ be a hyperelliptic surface of any type, let  $C\equiv (\alpha,\beta)$, where $\alpha> 4$ or $\beta> 4$. To prove 
 $(\star)$, it suffices to prove the inequality
$$(\star\star) \qquad r(\alpha+\beta)\geq 2\sum_{i=1}^r m_i.$$

\begin{proof} We have already observed that by 
 B{\'e}zout's Theorem, for every $i\in\{1,\ldots, r\}$ we have that
$\alpha+\beta\geq \frac 5 4m_i\geq m_i,$
hence
 $$(k_i-1)(\alpha+\beta)\geq (k_i-1)m_i.$$
Summing up these $r$ inequalities with inequality
$(\star\star)$
we obtain the inequality $(\star)$.
\end{proof}
\end{observation}

Now we will show that if  $\alpha> 4$ or $\beta> 4$ then $(\star\star)$ is satisfied for  $r\geq 3$.  
Let us denote $i$-th inequality in the inequality  $(\star\star)$ by $(\star\star_i)$, i.e.
$$(\star\star_i) \qquad (\alpha+\beta)\geq 2 m_i.$$

The inequality ($\star\star_i$) is satisfied for $m_i< 4$. Indeed,  $C$ is not a fibre and by assumption we have
$\alpha\geq 5$ or $\beta\geq 5$, hence $\alpha+\beta\geq 5+1=6\geq 2m_i$ for  $m_i\leq 3$. Therefore we may assume that $m_i\geq 4$.

We delete from $(\star\star)$ all the inequalities  $(\star\star_i)$ with $m_i< 4$  and consider a modified inequality ($\star\star$), possibly with a smaller number of points $r$. It may even happen that $r<2$ in the modified ($\star\star$).

Now we will prove $(\star\star)$ assuming that $m_i\geq 4$ for all  $i$, and $r\geq 3$. Equivalently, we want to show that
$$r^2(\alpha+\beta)^2 \geq  4\left(\sum_{i=1}^r m_i\right)^2$$
By inequality between means, it is enough to check that
$$r^2(\alpha+\beta)^2 \geq  4r\sum_{i=1}^r m_i^2$$
It suffices to check that
$$r\left(2\alpha\beta\right) \geq  2\sum_{i=1}^r m_i^2$$
By the genus formula $2\alpha\beta\geq\sum_{i=1}^r m_i^2-\sum_{i=1}^r m_i$ (see Proposition \ref{gen_form} and Observation \ref{gen_form_hyperell}), hence it is enough to prove that 
$$r\left(\sum_{i=1}^r m_i^2-\sum_{i=1}^r m_i\right)\geq  2\sum_{i=1}^r m_i^2$$
$$(r-2)\left(\sum_{i=1}^r m_i^2\right)-r\sum_{i=1}^r m_i\geq   0$$
We assume that $m_i\geq 4$, so $\sum_{i=1}^r m_i^2\geq 4\sum_{i=1}^r m_i$. Hence it is enough to show that
$$4(r-2)\left(\sum_{i=1}^r m_i\right)-r\sum_{i=1}^r m_i \geq   0$$
$$(3r-8)\cdot(4r) \geq  0$$
which is obviously true for $r\geq 3$.

To finish the proof of Lemma, it remains to check that the assertion holds for $r<3$.
If  $r=0$, then every inequality $(\star\star_i)$ is satisfied, which together with Observation~\ref{redukcja_k_i=1} completes the proof of the lemma. Hence we have to consider cases
 $r=1$ (when in the inequality $(\star\star)$ all but one inequalities $(\star\star_i)$ hold), and $r=2$.

\vskip 5pt

Let $r=2$. We prove Lemma in Cases I and IIIa, and separately in all the remaining cases. 

Let us consider Cases  I and IIIa. We have to prove the inequality
$$(k_1+k_2+1)(\alpha+\beta) - (k_1+1)m_1 - (k_2+1)m_2 \geq 0$$

Analogously to Observation \ref{redukcja_k_i=1} it suffices to prove the inequality $(1+1)(\alpha+\beta)+ (\alpha+\beta)\geq 2m_1+2m_2$. Indeed, adding two fulfilled inequalities of the form $(k_i-1)(\alpha+\beta)\geq (k_i-1)m_i$, we obtain the assertion.

Therefore we have to check that
$$3(\alpha+\beta) \geq   2m_1+2m_2.$$

In the linear system of divisor $D\equiv (4,4)$, for each points $x_1$,~$x_2$, there exists such a divisor 
 $D_{x_1,x_2}\equiv D$ that its multiplicity at  $x_1$ equals $4$, and multiplicity at  $x_2$ equals $2$.

Since $\alpha> 4$ or  $\beta> 4$ and the curve  $C$ is irreducible,  $C$ is not a component of $D$. Therefore by  B{\'e}zout's Theorem we have 
\begin{equation}\label{dla_2_punktow}
4(\alpha+\beta)=CD=CD_{x_1,x_2} \geq 4m_1+2m_2.
\end{equation}
Analogously
$$4(\alpha+\beta) \geq 2m_1+4m_2.$$
Summing up two inequalities above, we obtain
$$8(\alpha+\beta)\geq 6m_1+6m_2.$$
Hence
$$3(\alpha+\beta)\geq 
\frac{9}4m_1+\frac{9}4m_2\geq 2m_1+2m_2,$$
and the assertion is proved.

\vskip 5pt
Now let us consider Cases IIa, IIb, IIIb and IV.
There are two possibilities:

(a) One of the points $x_1$, $x_2$ (without loss of generality $x_2$) lies  respectively: on the fixed fibre  $A/2$ in Case IIa, on the intersection of the fixed fibres  $A/2$ and $B$ in Case IIb, on the intersection of the fixed fibres  $A$ and $B$ in Case  IIIb, on the fixed fibre $B$ in Case IV. Then the desired inequalities $N\widetilde{C}>0$ are implied by the inequality
$$(k_1+k_2)(\alpha+\beta) \geq (k_1+1)m_1+k_2m_2.$$
By observation analogous to Observation \ref{redukcja_k_i=1} it suffices to show that
$$2(\alpha+\beta) \geq   2m_1+m_2.$$
The assertion holds by  inequality  \eqref{dla_2_punktow}.

(b) None of the points  $x_1$, $x_2$ lies on the fixed fibre respectively $A/2$, $A/2$ intersected with $B$, $A$ intersected with $B$, and $B$. By assumption of Case respectively IIa, IIb, IIIb and IV, at the beginning there was a point $x_3$ on those fixed fibres or on the intersection of the fixed fibres, and the inequality ($\star\star_3$) is satisfied. We restore this inequality, and we want to prove that 
$$(k_1+k_2+k_3)(\alpha+\beta) \geq (k_1+1)m_1+(k_2+1)m_2+k_3m_3$$
By observation analogous to Observation \ref{redukcja_k_i=1}, it is enough to prove that
$$3(\alpha+\beta)\geq   2m_1+2m_2+m_3$$

In the linear system of divisor  $D\equiv (4,4)$ for each points $x_1$,  $x_2$, $x_3$  there exists such a divisor 
 $D_{x_1,x_2,x_3}\equiv D$ that its multiplicity at  $x_1$ equals $3$,  its multiplicity at  $x_2$ equals $3$, and its multiplicity at  $x_3$ equals $2$.

By B{\'e}zout's Theorem we obtain
\begin{equation}\label{dla_3_punktow}
4(\alpha+\beta)=CD=CD_{x_1,x_2,x_3} \geq \sum_{i=1}^3 \mult_{x_i}C\cdot \mult_{x_i}D_{x_1,x_2,x_3} \geq 3m_1+3m_2+2m_3.
\end{equation}
Hence
$$3(\alpha+\beta)\geq \frac{9}4m_1+\frac{9}4m_2+\frac{6}4m_3\geq
  2m_1+2m_2+m_3,$$
and we are done.

\vskip 5pt

Now let $r=1$. At the beginning the number of points $r$ was at least $2$, hence while deleting from the inequality  ($\star\star$) the inequalities ($\star\star_i$) with $m_i\leq 3$, we deleted all by one inequalities.

We restore one of the deleted inequalities ($\star\star_i$): 
 an arbitrary one in Cases I and IIIa;  in Cases  IIa, IIb, IIIb and IV --- the inequality corresponding to  $x_i$ lying respectively on the fixed fibre $A/2$, on the intersection of the fixed fibres $A/2$ and $B$, on the intersection of the fixed fibres $A$ and $B$, on the fixed fibre $B$ (if $x_1$ is not in such a position; an arbitrary inequality ($\star\star_i$) otherwise).
We obtained the inequality with $r=2$ which was already proved in subcase~(a).
\end{proof}

\begin{lemma}\label{alfa<=4}
Proposition \ref{C_nie_wlokno} holds if $\alpha \leq 4$ and $\beta\leq 4$.
\end{lemma}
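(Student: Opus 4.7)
The plan is to establish the master inequality
\[
(\star)\qquad \bigg(\sum_{i=1}^r k_i\bigg)(\alpha+\beta) \;\geq\; \sum_{i=1}^r (k_i+1)\,m_i,
\]
which, as observed in Proposition \ref{C_nie_wlokno}, implies every case-specific inequality listed there (the differences being the non-negative drops $(k_i+1)m_i \to k_i m_i$ together with the strictly positive extra terms $\alpha$, $\beta$ or $m_s$). I work throughout under the standing hypotheses $\alpha,\beta\in\{1,\ldots,4\}$, $\sum_{i=1}^r k_i=k+1$, $k_i\geq 1$, $r\leq k+1$ and $k\geq 2$.

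First I would harvest two universal constraints on the multiplicities $(m_1,\ldots,m_r)$. The genus formula (Proposition \ref{gen_form}), combined with the numerical triviality of $K_S$ on any hyperelliptic surface and Observation \ref{gen_form_hyperell}, yields
\[
\sum_{i=1}^r m_i(m_i-1) \;\leq\; 2\alpha\beta \;\leq\; 32.
\]
For each $i$, applying B\'ezout to $C$ against the fibre of $\Psi$ through $x_i$ (class $B$) and against the fibre of $\Phi$ through $x_i$ (class $A$ generically, $A/2$ on singular fibres), and using that the irreducible $C$ is not itself a fibre, gives
\[
m_i \;\leq\; C\cdot B \;=\; \alpha \qquad\text{and}\qquad m_i \;\leq\; C\cdot A \;=\; 2\beta,
\]
so $m_i\leq\min(\alpha,2\beta)\leq 4$.

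With these bounds in hand, I would run a case analysis on the sixteen pairs $(\alpha,\beta)\in\{1,\ldots,4\}^2$. Whenever $\alpha\leq\beta$ the crude estimate
\[
\sum_{i=1}^r (k_i+1)m_i \;\leq\; \alpha(k+1+r) \;\leq\; 2\alpha(k+1) \;\leq\; (\alpha+\beta)(k+1)
\]
already delivers $(\star)$. The symmetric estimate using $m_i\leq 2\beta$ dispatches the pairs with $\alpha\geq 3\beta$, namely $(3,1)$ and $(4,1)$. Together these two lines handle ten of the sixteen pairs outright.

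The main obstacle lies in the four remaining tight pairs $(\alpha,\beta)\in\{(2,1),(3,2),(4,2),(4,3)\}$, where neither crude bound is strong enough. Here the strategy is to exploit the genus bound $\sum m_i(m_i-1)\leq 2\alpha\beta$ more finely: the maximal value of $m_i$ can be attained only at a small number of indices, which in turn forces $r$ to be small and brings $\sum (k_i+1)m_i$ back under control; in each sub-configuration $(\star)$ then reduces to a short arithmetic check involving $k$, $r$ and the few non-trivial $m_i$. Should an extremal sub-configuration make $(\star)$ itself too tight, I would fall back to the original case-specific inequalities of Proposition \ref{C_nie_wlokno}, whose additional positive contributions from $\alpha$, $\beta$ or $m_s$ supply the missing slack. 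The bookkeeping across these boundary sub-configurations is where the main difficulty concentrates.
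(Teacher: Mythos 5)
Your key new ingredient is correct and is genuinely different from (and in part stronger than) what the paper uses: intersecting the irreducible non-fibre curve $C$ with the fibre of $\Phi$ (resp.\ $\Psi$) through $x_i$ gives $m_i\le C\cdot B=\alpha$ (resp.\ $m_i\le 2\beta$, or $\beta$ on a singular fibre), whereas the paper bounds the $m_i$ only via the genus formula and therefore has to rule out configurations such as $m_1=6$ on a $(4,4)$-curve or $m_1=2$ on a $(1,1)$-curve, which your B\'ezout bound excludes outright. Your crude estimate then correctly disposes of all pairs with $\alpha\le\beta$ or $\alpha\ge 3\beta$, including the case $C\equiv(1,1)$ that the paper calls the most subtle. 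If completed, this route would be shorter than the paper's table-plus-case-analysis.

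However, the proof is not complete: for the four remaining pairs you only describe a strategy, and this is precisely where the difficulty lives. For $(3,2)$, $(4,2)$ and $(4,3)$ one can in fact verify that $(\star)$ always holds by combining your multiplicity bound with the genus constraint (the deficit of a ``bad'' point with $m_i=\min(\alpha,2\beta)$ and $k_i=1$ is covered by the surplus of the remaining points, which must exist since $k\ge 2$). But for $(2,1)$ the master inequality $(\star)$ genuinely fails: take $k=2$, $r=3$, $k_1=k_2=k_3=1$, $m_1=m_2=2$, $m_3=1$; the genus formula allows this ($2+2+0=4=2\alpha\beta$) and $(\star)$ reads $9\ge 10$. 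Here one must fall back on the case-specific inequalities of Proposition \ref{C_nie_wlokno}, and in some of the six cases (e.g.\ Case IIIb, where the extra term is only $\beta=1$) the slack from the extra term alone is insufficient --- one also needs the drops $(k_i+1)m_i\to k_im_i$ for points forced onto the distinguished fibres by the hypotheses $\sum k_i^W>\frac{k+1}{2}$, i.e.\ exactly the ``restoration'' bookkeeping that occupies most of the paper's own proof of this lemma. You explicitly defer this verification, so as written there is a gap: the argument establishes the lemma for twelve of the sixteen numerical types of $C$ and leaves the hardest residual configurations unchecked.
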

\begin{proof}
We denote 
$$(\star_i) \qquad (\alpha+\beta) k_i\geq (k_i+1)m_i.$$

Observe the following property:
\begin{remark}\label{ogr_krot_m_i} 
For an arbitrary  $m_i$ the inequality $(\star_i)$ is satisfied if  $m_i\leq \frac 1 2(\alpha+\beta)$. Indeed,  $2m_i\geq\left(1+\frac 1 {k_i}\right)m_i$ for all $k_i$.

In particular, if  $m_i=0$ or  $m_i=1$, then the inequality  $(\star_i)$ is satisfied, since  $\alpha\geq 1$ and $\beta\geq 1$.
\end{remark}

The multiplicities of $C$ at  $x_1$, $\dots$, $x_r$ satisfy genus formula, i.e. $2\alpha\beta\geq \sum_{i=1}^r m_i^2 - \sum_{i=1}^r m_i$ (see Proposition \ref{gen_form} and Observation \ref{gen_form_hyperell}). In particular, for any  $x_i$ we have an upper bound $2\alpha\beta\geq m_i^2 -  m_i$.

If the inequality $(\star_i)$ holds for some $m_i$, then it holds also for all multiplicities $n_i<m_i$. After renumbering the points we assume that the  multiplicities are decreasing: $m_1\geq m_2\geq\ldots\geq m_r$.

\vskip 5pt
In the table below, for every curve $C\equiv(\alpha,\beta)$ with $0<\alpha\leq 4$ and $0<\beta\leq 4$,  we present the following quantities:\\
-- an upper bound for the maximal possible multiplicity $m_i$ at any point  $x_i$, obtained from genus formula,\\
-- an upper bound for the multiplicity $m_i$, for which by Remark \ref{ogr_krot_m_i} the inequality  ($\star_i$) holds,\\
-- all possible values of $m_1$ greater than the number from the previous column, and for each such $m_1$ the greatest possible value of $m_2$ and $m_i$ for $i>2$ (or an upper bound in the latter case), all obtained from genus formula.
$$
\begin{array}{c||c||c|c||c|c|c}
\text{Case no.}&C\equiv(\alpha,\beta)&\max m_i& \max m_i\text{ such that } (\star_i) \text{ holds}&m_1&m_2&m_i \text{ for } i>2\\
\hline\hline
\text{i}&(4,4)&6&4&\scriptsize{\begin{array}{c}
6\\5\end{array}}&\scriptsize{\begin{array}{c}
2\\4\end{array}}& 1\\
\hline
\text{ii}&(4,3) \text{ or } (3,4)&5&3&\scriptsize{\begin{array}{c}
5\\4\end{array}}&\scriptsize{\begin{array}{c}
2\\3\end{array}}&\leq 2\\
\hline
\text{iii}&(4,2)\text{ or } (2,4)&4&3&4&2&\leq 2\\
\hline
\text{iv}&(4,1)\text{ or } (1,4)&3&2&3&2& 1\\
\hline
\text{v}&(3,3)&4&3&4&3& 1\\
\hline
\text{vi}&(3,2)\text{ or } (2,3)&4&2&\scriptsize{\begin{array}{c}
4\\3\end{array}}&\scriptsize{\begin{array}{c}
1\\3\end{array}}& 1\\
\hline
\text{vii}&(3,1)\text{ or } (1,3)&3&2&3&1& 1\\
\hline
\text{viii}&(2,2)&3&2&3&2& 1\\
\hline
\text{ix}&(2,1)\text{ or }(1,2)&2&1&2&2& 1\\
\hline
\text{x}&(1,1)&2&1&2&1& 1\\
\hline
\end{array}
$$

Note that in each case (i)-(x), the inequality $(\star_i)$ holds for each  multiplicity $m_i$ where $i>2$. Therefore we would be done if we could prove $(\star)$ for $r=2$ and points $x_1$, $x_2$ with multiplicities $m_1$, $m_2$ listed in the table.

Simple computations give us that 
\begin{remark}\label{lem_r=2}\textcolor{white}{a}
\begin{enumerate}
\item For  $r=2$, if $m_1+m_2\leq\alpha+\beta$, then $(\star)$ holds.
\item Moreover for $r=2$, if $ m_1+m_2\leq \alpha+\beta+1$, then the inequality from Cases I and~IIIa holds, i.e. $(\alpha+\beta) \left(\sum_{i=1}^r k_i+1\right)\geq \sum_{i=1}^r (k_i+1)m_i$. 
\end{enumerate}
\end{remark}

Let us consider the cases listed in the table. In cases  (ii), (iii), (iv), (vii) the inequality~($\star$) holds by  Remark \ref{lem_r=2}(2). In cases  (i), (v), (vi), (viii), (ix) we repeat the reasoning with restoring a certain inequality ($\star_3$), if it is needed. Case (x) is the most subtle --- in some subcases we have to restore two inequalities ($\star_3$) and ($\star_4$). 

We explain the cases (i) and (x) in detail.

\vskip 5pt

Case (i): $C\equiv (4,4)$.

It is enough to prove $(\star)$ in two subcases: $r=2$, $m_1=6$, $m_2=2$, and $r=2$, $m_1=5$, $m_2=4$.
In the first subcase the we get the assertion by Remark \ref{lem_r=2}(1). 
In the second subcase, in some situations we have to restore the fulfilled inequality $(\star_3)$ to prove $(\star)$. Our resoning differs with respect to cases of the main theorem:\\
-- In Cases I and IIIa the inequality $(\star)$ holds by Remark \ref{lem_r=2}(2).\\
-- In Cases IIa, IIIb, IV there are two possibilities:

\textbullet \ One of the points $x_1$, $x_2$ (without loss of generality $x_2$) lies  respectively: on the fixed fibre  $A/2$ in Case IIa,  on the intersection of the fixed fibres  $A$ and $B$ in Case  IIIb, on the fixed fibre $B$ in Case IV. Then it is enough to prove that
$$(\alpha+\beta)(k_1+k_2)+\min\{\alpha,\beta\} >   (k_1+1)m_1+k_2m_2$$
$$8(k_1+k_2)+4 >   5(k_1+1)+4k_2$$
$$3k_1+4k_2 >  1$$
The last inequality holds as  $k_i\geq 1$ for all $i$.

\textbullet \ None of the points  $x_1$, $x_2$ lies on the fixed fibre respectively $A/2$, $A$ intersected with $B$, and $B$. By assumption of Cases respectively IIa,  IIIb and IV, at the beginning there was a point $x_3$ on the fixed fibre or on the intersection of the fixed fibres, and the maximal possible by genus formula multiplicity $m_3$ is equal to $1$. We restore the inequality ($\star_3$), and we want to prove that 
$$(\alpha+\beta)(k_1+k_2+k_3)+\min\{\alpha,\beta\} >   (k_1+1)m_1+(k_2+1)m_2+k_3m_3$$
$$3k_1+4k_2+7k_3 >   5$$
The inequality holds.\\
-- In Case IIb there are also two possibilities:

\textbullet \ One of the points $x_1$, $x_2$ is the intersection  point of the fixed fibres  $A/2$ and $B$. Then the inequality
$$(\alpha+\beta)(k_1+k_2)+m_2 >   (k_1+1)m_1+k_2m_2$$
$$3k_1+4k_2 >   1$$
obviously holds.

\textbullet \ None of the points  $x_1$, $x_2$ lies on the intersection of the fixed fibres $A/2$ and $B$. Then by assumption  of Case IIb there was a point  $x_3$ with $m_3=1$ on this intersection. We restore the inequality ($\star_3$) and have to check that the following inequality is satisfied 
$$(\alpha+\beta)(k_1+k_2+k_3)+m_3 >   (k_1+1)m_1+(k_2+1)m_2+k_3m_3$$
$$3k_1+4k_2+7k_3 >   8,$$
which is true.

\vskip 5pt
Case (x): $C\equiv (1,1)$. 

We consider a situation $r=2$, $m_1=2$, $m_2=1$. If needed, we restore one or two inequalities ($\star_i$) and prove ($\star$) for $r=3$, $m_1=2$, $m_2=1$, $m_3=1$ or for $r=4$, $m_1=2$, $m_2=1$, $m_3=1$, $m_4=1$. Here come the details:\\
-- In Cases I and IIIa the inequality $(\star)$ holds by Remark \ref{lem_r=2}(2).\\
-- In Cases IIa, IIIb, IV there are two possibilities:

\textbullet \ One of the points $x_1$, $x_2$ (say $x_2$) lies  respectively: on the fixed fibre  $A/2$ in Case IIa,  on the intersection of the fixed fibres  $A$ and $B$ in Case  IIIb, on the fixed fibre $B$ in Case IV. Then it suffices to prove that
$$(\alpha+\beta)(k_1+k_2)+\min\{\alpha,\beta\} >  (k_1+1)m_1+k_2m_2$$
which gives
$$k_2 >   1$$
If $k_2=1$, then respectively: on the fixed fibre  $A/2$, on the fixed fibre $B$ but outside the intersection of the fixed fibres  $A$ and $B$ (otherwise $k=1$, but we have excluded such a situation at the beginning of main theorem's proof), on the fixed fibre $B$, there was originally at least one more point, as for points on the fibre $\sum k_i^W>\frac{k+1}2$. If $x_1$ is the described point, it suffices to prove that
$$(\alpha+\beta)(k_1+k_2)+\min\{\alpha,\beta\} > k_1m_1+k_2m_2$$
$$k_2+1 > 0$$
The inequality holds. If the described point is different from $x_1$, then it has multiplicity  $m_3=1$. We have to check that
$$(\alpha+\beta)(k_1+k_2+k_3)+\min\{\alpha,\beta\} > (k_1+1)m_1+k_2m_2+k_3m_3$$
$$k_2+k_3 >  1$$
The inequality holds.

\textbullet \ None of the points  $x_1$, $x_2$ lies on the fixed fibre respectively $A/2$, $A$ intersected with $B$, and $B$. By assumption of Case respectively IIa,  IIIb and IV, at the beginning there was a point $x_3$ on the fixed fibre or on the intersection of the fixed fibres, and the maximal possible by genus formula multiplicity $m_3$ is equal to $1$. We restore the inequality ($\star_3$), and we have to check that 
$$(\alpha+\beta)(k_1+k_2+k_3)+\min\{\alpha,\beta\} >  (k_1+1)m_1+(k_2+1)m_2+k_3m_3$$
$$k_2+k_3 >  2$$
If $k_3=1$, then by assumption of Case IIa, IIIb, IV respectively on the fixed fibre $A/2$, on the fixed fibre  $B$ but outside the intersection of $A$ and $B$, on the fixed fibre $B$, there is a point $x_4$. In Case IIIb $x_4$ may be equal to one of the points $x_1$, $x_2$ --- we obtain an inequality which was already proved. Otherwise in Case IIIb, and in Cases IIa and IV $x_4$ is different from  $x_1$, $x_2$, $x_3$ and $m_4=1$. We have to prove that
$$(\alpha+\beta)(k_1+k_2+k_3+k_4)+m_3 >  (k_1+1)m_1+(k_2+1)m_2+k_3m_3+k_4m_4$$
$$k_2+k_3+k_4>2$$
The inequality holds.\\
-- In Case IIb there are two possibilities as well:

\textbullet \ One of the points $x_1$, $x_2$ (say $x_2$) is  the intersection point of the fixed fibres  $A/2$ and $B$. Then we have to prove that
$$(\alpha+\beta)(k_1+k_2)+m_2 >   (k_1+1)m_1+k_2m_2$$
$$k_2 >   1$$
If $k_2=1$, then by assumption of Case IIb there is at least one more point $x_3$ on the fixed fibre  $A/2$. If  $x_3=x_1$ then it is enough to check that
$$(\alpha+\beta)(k_1+k_2)+m_2 >   k_1m_1+k_2m_2$$
$$k_2+1 >   0$$
The inequality holds. Otherwise  $m_3=1$, and it suffices to show that
$$(\alpha+\beta)(k_1+k_2+k_3)+m_2 >   (k_1+1)m_1+k_2m_2+k_3m_3$$
$$k_2+k_3 >   1$$
The inequality holds.

\textbullet \ None of the points $x_1$, $x_2$ in the intersection point of the fixed fibres  $A/2$ and $B$. Then on this intersection there is a point  $x_3$ with $m_3=1$. Hence it is enough to prove that 
$$(\alpha+\beta)(k_1+k_2+k_3)+m_3 >  (k_1+1)m_1+(k_2+1)m_2+k_3m_3$$
$$k_2+k_3 > 2$$
If $k_3=1$ then there is one more point $x_4$ on the fixed fibre $A/2$. If $x_4$ is equal to  $x_1$ or $x_2$ then we get the already proved inequality
$$(\alpha+\beta)(k_1+k_2+k_3)+m_3 >   (k_1+1)m_1+k_2m_2+k_3m_3$$
If $x_4$ is different from  $x_1$, $x_2$, $x_3$ then $m_4=1$, and we obtain the inequality
$$(\alpha+\beta)(k_1+k_2+k_3+k_4)+m_3 >  (k_1+1)m_1+(k_2+1)m_2+k_3m_3+k_4m_4$$
which has been proved as well. 
\end{proof}
The proof of the Lemma \ref{alfa>4} and Lemma \ref{alfa<=4} is completed, hence the main theorem has been proved. 
\end{proof}

\vskip 5pt

We have presented the detailed proof for hyperelliptic surfaces of type 1. Below we list the small differences which occur in the proof for hyperelliptic surfaces of types 2-7.

\begin{remark}\label{uw_idzie_na_innych_S}~

\textbullet \ For a hyperelliptic surface of even type there is no Case  IIb nor IIIb, as the divisor $(\mu/\gamma) B\equiv(0,1)$ is not effective on such a surfaces. Hence while proving that respectively  $M\widetilde{C}\geq 0$ and $N\widetilde{C}> 0$, we do not consider the curve $C\equiv (\mu/\gamma) B$.

\textbullet \ For a hyperelliptic surface of type  $3$, $4$, $7$ there are more types of singular fibres $mA/\mu$. Hence in Cases  I, IIa, IIIa, IV, and for a surfaces of type 3 and 7 also in Cases IIb and IIIb, we have to consider the intersection of $C$ with singular fibres of all admissible types $mA/\mu$, but they are estimated from below by the intersetion with a fibre $A/\mu$.

\textbullet \ For a hyperelliptic surface of type $3$, $4$ and $7$ we consider additional cases --- points $x_1$, $\ldots$, $x_s$ lie on a fixed singular fibre $mA/\mu$, where $1<m\leq\frac{\mu}2$. These cases are analogous to Cases IIIa, IIIb.
\end{remark}

Proof of Theorem \ref{k-dzet-szer} for $r=1$ gives us the following:
\begin{corollary}\label{gen-k-dzet}
Let $S$ be a hyperelliptic surface. Let $L$ be a line bundle of type  $(k+2,k+2)$ on $S$. Then $L$ generates $k$-jets at any point $x$.
\end{corollary}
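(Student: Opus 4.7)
The plan is to observe that this corollary is literally the $r=1$ case of Theorem \ref{k-dzet-szer}: by definition, generation of $k$-jets of $L$ at $x$ is the surjectivity of the restriction map
$$H^0(S,L)\longrightarrow H^0(S,L\otimes \Oo_S/m_x^{k+1}),$$
which is the $k$-jet ampleness condition applied to the single point $x_1=x$ with $k_1=k+1$. Hence it suffices to reproduce the $r=1$ branch of the theorem's proof, which, unlike the $r\geq 2$ analysis, does not require any case distinction based on the positions of the points with respect to the fibrations $\Phi$ and $\Psi$.

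Concretely, I would begin with the standard short exact sequence
$$0\longrightarrow (K_S+L)\otimes m_x^{k+1}\longrightarrow K_S+L\longrightarrow (K_S+L)\otimes \Oo_S/m_x^{k+1}\longrightarrow 0$$
and pass to the associated long exact sequence in cohomology. Since $K_S$ is numerically trivial on any hyperelliptic surface, $K_S+L$ represents the numerical class $(k+2,k+2)$, and the required surjectivity will follow once $H^1((K_S+L)\otimes m_x^{k+1})=0$ is established. By the projection formula applied to the blow-up $\pi\colon\widetilde{S}\to S$ at $x$ with exceptional divisor $E$, this translates (using $K_{\widetilde{S}}=\pi^*K_S+E$) to the vanishing
$$H^1\bigl(K_{\widetilde{S}}+\pi^*L-(k+2)E\bigr)=0.$$

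The next step is to invoke the Kawamata-Viehweg vanishing theorem, which reduces the problem to verifying that the divisor $\pi^*L-(k+2)E$ is both nef and big. For nefness I would appeal to Theorem 3.1 of \cite{Fa2015}, which gives $\varepsilon((1,1),x)\geq 1$ at every point $x$; since $L\equiv(k+2)\cdot(1,1)$, scaling yields $\varepsilon(L,x)\geq k+2$, so $\pi^*L-(k+2)E$ is nef by the Seshadri constant's characterisation as a supremum of nef twists. For bigness, since the divisor is already nef, it suffices to check that its top self-intersection is positive, and this follows from the direct computation
$$\bigl(\pi^*L-(k+2)E\bigr)^2=L^2-(k+2)^2=2(k+2)^2-(k+2)^2=(k+2)^2>0.$$

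There is essentially no obstacle here. The entire argument lives in the simplest branch of the proof of Theorem \ref{k-dzet-szer} and depends only on the Seshadri bound of \cite{Fa2015} combined with Kawamata-Viehweg vanishing; the lengthy case analysis of the theorem (the four main cases and the curve-by-curve intersection estimates of Proposition \ref{C_nie_wlokno}) is needed precisely to handle the collision of several infinitesimal conditions at different points, a phenomenon that is absent when $r=1$.
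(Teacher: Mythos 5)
Your proposal is correct and follows exactly the paper's own route: the paper derives this corollary as the $r=1$ branch of the proof of Theorem \ref{k-dzet-szer}, i.e.\ the short exact sequence, projection formula, the Seshadri bound $\varepsilon((1,1),x)\geq 1$ from \cite{Fa2015} for nefness, the self-intersection computation for bigness, and Kawamata--Viehweg vanishing. Nothing is missing.
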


\subsection*{Acknowledegments} The author would like to thank  Tomasz Szemberg and Halszka Tutaj-Gasi\'nska for many helpful discussions and improving the readability of the paper.


\begin{thebibliography}{BaDRSz2000}
\bibitem[Ap1998]{Ap1998} M. Aprodu, \textit{An Appel-Humbert theorem for hyperelliptic surfaces}, J. Math. Kyoto Univ. 38-1 (1998), 101-121.
\bibitem[BaDRSz2000]{BaDRSz2000} Th. Bauer, S. Di Rocco, T. Szemberg, \textit{Generation of jets on $K3$ surfaces}, J. Pure Appl.
Algebra 146 (2000), no. 1, 17-27.
\bibitem[BaSz1997]{BaSz1997} Th. Bauer, T. Szemberg, \textit{Higher order embeddings of abelian varieties}, Math. Z. 224 (1997), 449-455.
\bibitem[BaSz2-1997]{BaSz2-1997} Th. Bauer, T. Szemberg, \textit{Primitive higher order embeddings of abelian surfaces}, Trans. Am. Math. Soc 349 (1997), 1675-1683.
\bibitem[Bea1996]{Bea1996} A. Beauville, \textit{Complex Algebraic Surfaces}, London Mathematical Society Student Texts 34 (2nd ed.), Cambridge University Press 1996.
\bibitem[BeFS1989]{BeFS1989} M.C. Beltrametti, P. Francia, A.J. Sommese, \textit{On Reider's method and higher order embeddings}, Duke Math. J. 58 (1989), 425-439.
\bibitem[BeS1988]{BeS1988}  M.C. Beltrametti,  A.J. Sommese, \textit{On $k$-spannedness for projective surfaces}, Algebraic Geometry (L'Aquila 1988), Lect. Notes. Math. 1417, Springer-Verlag 1990, 24-51.
\bibitem[BeS1993]{BeS1993} M.C. Beltrametti,  A.J. Sommese, \textit{On $k$-jet ampleness}, in: Complex Analysis and
Geometry, edited by V. Ancona and A. Silva, Plenum Press, New York 1993, 355-376.
\bibitem[BeSz2000]{BeSz2000} M.C. Beltrametti, T. Szemberg, \textit{On higher order embeddings of Calabi-Yau threefolds}, Arch. Math. 74 (2000), 221-225.
\bibitem[BF1907]{BF1907} G. Bagnera, M. de Franchis, \textit{Sur les surfaces hyperelliptiques}, C. R. Acad. Sci. 145 (1907), 747-749.
\bibitem[ChI2014]{ChI2014} S. Chintapalli, J. Iyer, \textit{Embedding theorems on hyperelliptic varieties}, Geom. Dedic. 171-1 (2014), 249-264.
\bibitem[DR1999]{DR1999} S. Di Rocco, \textit{Generation of k-jets on toric varieties}, Math. Z. 231 (1999), 169-188.
\bibitem[ES1909-10]{ES1909-10} F. Enriques, F. Severi, \textit{M\'emoire sur les surfaces hyperelliptiques}, Acta Math. 32 (1909), 283-392, 33 (1910) 321-403.
\bibitem[Fa2015]{Fa2015} {\L}. Farnik, \textit{A note on Seshadri constants of line bundles on hyperelliptic surfaces}, preprint arXiv:1502.03806v1 [math.AG], 2015.
\bibitem[GH1978]{GH1978} P. Griffiths, J. Harris, \textit{Principles of algebraic geometry}, John Wiley and Sons, New York 1978.
\bibitem[Laz1997]{Laz1997} R. Lazarsfeld, \textit{Lectures on linear series, with the assistance of Guillermo Fern\'andez del Busto}, in: {Complex algebraic geometry (Park City, UT, 1993)}, Amer. Math. Soc., Providence,~RI 1997, 161-219.
\bibitem[MP1993]{MP1993} M. Mella, M. Palleschi, \textit{The $k$-very Ampleness on an Elliptic Quasi Bundle}, Abh. Math. Sem. Univ. Hamburg 63 (1993), 215-226.
\bibitem[PAG2004]{PAG2004} R. Lazarsfeld, \textit{Positivity in Algebraic Geometry I and II}, Springer-Verlag, 2004.
\bibitem[PP2004]{PP2004} G. Pareschi, M. Popa, \textit{Regularity on Abelian Varieties II: basic
results on linear series and defining equations}, J. Algebraic Geom. 13 (2004), 167-193.
\bibitem[RSz2004]{RSz2004} S. Rams, T. Szemberg, \textit{Simultaneous generation of jets on K3 surfaces}, Arch. Math. 83 (2004), 353-359.
\bibitem[Se1990]{Se1990} F. Serrano, \textit{Divisors of Bielliptic Surfaces and Embeddings in $\mathbb{P}^4$}, Math. Z.~203 (1990), 527-533.
\end{thebibliography}
\end{document}